\numberwithin{equation}{section}
\newcommand{\R}{{\mathbb R}}
\newcommand\wq{\infty}
\newcommand\al{\alpha}
\newcommand\be{\beta}
\newcommand\loc{{\mathop\mathrm{\,loc\,}}}
\newcommand\De{\Delta}
\newcommand\ep{\epsilon}
\newcommand\ga{{\gamma}}
\newcommand\Om{{\Omega}}
\newcommand\var{\varphi}
\newcommand\si{\sigma}
\newcommand\pa{\partial}
\newcommand\na{\nabla}
\newcommand\D{\rm d}
\numberwithin{equation}{section}
\newtheorem{theorem}{Theorem}[section]
\newtheorem{corollary}[theorem]{Corollary}
\newtheorem{lemma}[theorem]{Lemma}
\newtheorem{proposition}[theorem]{Proposition}
\theoremstyle{definition}
\newtheorem{definition}[theorem]{Definition}
\newtheorem{remark}[theorem]{Remark}
\def\Xint#1{\mathchoice
	{\XXint\displaystyle\textstyle{#1}}%
	{\XXint\textstyle\scriptstyle{#1}}%
	{\XXint\scriptstyle\scriptscriptstyle{#1}}%
	{\XXint\scriptscriptstyle\scriptscriptstyle{#1}}%
	\!\int}
\def\XXint#1#2#3{{\setbox0=\hbox{$#1{#2#3}{\int}$}
		\vcenter{\hbox{$#2#3$}}\kern-.5\wd0}}
\def\dashint{\Xint-}
\begin{document}

\title[Regularity of $P$-harmonic mappings]{Some Regularity results for $P$-harmonic mappings between Riemannian manifolds}

\author{Chang-Yu Guo and Chang-Lin Xiang*}

\address[Chang-Yu Guo]{Institute of Mathematics, \'Ecole Polytechnique F\'ed\'erale de Lausanne (EPFL), Section 8,  CH-1015 Lausanne, Switzerland and School of Mathematics, Shandong University, P. R. China}
\email{guocybnu@gmail.com}

\address[Chang-Lin Xiang]{School of Information and Mathematics, Yangtze University, Jingzhou 434023, P.R. China}
\email{changlin.xiang@yangtzeu.edu.cn}

\thanks{* Corresponding author}
\thanks{C.-Y. Guo was supported by Swiss National Science Foundation Grant 165848, 165507, 175985 and the Qilu funding of Shandong University. The corresponding author C.-L. Xiang is financially supported by the National Natural Science Foundation of China (No. 11701045) and by  the Yangtze Youth Fund (No. 2016cqn56).}

\begin{abstract}
Let $M$ be a $C^2$-smooth Riemannian manifold with boundary and $N$ a complete $C^2$-smooth Riemannian manifold. We show that each stationary $p$-harmonic mapping $u\colon M\to N$, whose image lies in a compact subset of $N$, is locally $C^{1,\alpha}$ for some $\alpha\in (0,1)$, provided that $N$ is simply connected and has non-positive sectional curvature. We also prove similar results for minimizing $p$-harmonic mappings with image being contained in a regular geodesic ball. Moreover, when $M$ has non-negative Ricci curvature and $N$ is simply connected with non-positive sectional curvature, we deduce a  gradient estimate for  $C^1$-smooth weakly $p$-harmonic mappings from which follows a Liouville-type theorem in the same setting.
\end{abstract}

\maketitle

{\small
\keywords {\noindent {\bf Keywords:} Non-positive curvature; regular geodesic ball; $p$-harmonic mappings;  interior regularity; gradient estimate; Liouville theorem}
\smallskip
\newline
\subjclass{\noindent {\bf 2010 Mathematics Subject Classification: }  58E20}
}
\bigskip

\section{Introduction and main results}
Let $(M,g)$ and $(N,h)$ be two Riemannian manifolds with $\dim M=n$ and let $1<p<\infty$ a constant. A $p$-harmonic mapping $u:M\to N$ is a  critical point of the energy functional $\int_M |\na u|^p\D \mu.$ Regularity theory for $p$-harmonic mappings between Riemannian manifolds have been explored extensively in the literature, see subsection \ref{subsec: bg} below for more details. In this note, our aim is to enrich some regularity results in this respect, particularly in the case $1<p<2$.

\subsection{Background}\label{subsec: bg}

The research on harmonic mappings (i.e. $p=2$) has a long and distinguished history, making it one of the most central topics in geometric analysis on manifolds~\cite{sy97}. Since it is almost impossible to describe all the relevant works, we only briefly introduce some important works which are largely related to  our problem. In his pioneering work~\cite{m48}, Morrey proved the H\"older continuity of {minimizing} harmonic mappings when $n=2$ (and smooth if $M$ and $N$ are smooth).  The breakthrough of higher dimensional theory for harmonic mappings was made by Eells and Sampson~\cite{es64}, where they proved that every homotopy class of mappings from a closed manifold $M$ into $N$ has a smooth harmonic representative, if $N$ has non-positive (sectional) curvature. Important progress were made later by Hartman~\cite{h67} and Hamilton~\cite{h75}. When the image of a (weakly) harmonic mapping $u$ is contained in a regular geodesic ball of $N$, the existence, uniqueness and regularity theory were substantially developed by Hildebrandt and Widman~\cite{hw77},  {J\"ager and Kaul \cite{Jager-Kaul-1979}} and Hildebrandt, Kaul and Widman~\cite{hkw77}. In particular, it was proved in~\cite{hkw77} that each (weakly) harmonic mapping $u\colon M\to N$ is smooth whenever $u(M)$ is contained in a regular geodesic ball $B_R(P)$ of $N$ (see Definition~\ref{def:regular ball} below for the precise definition of regular geodesic ball). This result is optimal in the sense that the result fails if we enlarge the radius $R$ of the geodesic ball $B_R(P)$ (so that $B_R(P)$ fails to be regular). In the Euclidean setting, important results were obtained by Giaquinta and Giusti~\cite{gg82} for the case { where} the image of a (locally minimizing) harmonic mappings lie in a coordinate chart. The regularity theory for (minimizing) harmonic mappings into general target Riemannian manifolds was later developed by Schoen and Uhlenbeck in their seminal paper~\cite{su82} (see also~\cite{su83} for boundary regularity theory and~\cite{su84} for the case $N=\mathbb{S}^n$). In particular, Schoen and Uhlenbeck proved that minimizing harmonic mappings are smooth away from a small singular set with Hausdorff dimension no more than $n-3$. { Later, Lin \cite{l99} provided a necessary and sufficient condition for gradient estimates of stationary harmonic mappings. In particular, he showed that if the universal cover of $N$ supports a pointwise convex function, then every smooth stationary harmonic mapping enjoys a global gradient estimates under suitable assumptions on the boundary $\pa M$. He also showed that the singular set of stationary harmonic mappings has dimension less than or equal to $n-4$, under the assumption that $N$ has no smooth nonconstant harmonic sphere $\mathbb{S}^2$.}  {T}he structure of singular sets (of minimizing and stationary harmonic mappings) has gained deeper understanding in the recent works~\cite{cn13,l99,nv17}; see also~\cite{rs08} for an elegant new approach  for the regularity result of weakly harmonic mappings.

General $p$-harmonic mappings, $1<p<\infty$, also gained growing interest in the past decades; see for instance~\cite{df90,fr02,fr03,f88,f89,f90,ff88,gw17,hl87,hlw97,Luckhaus88,nvv16}.
Relying on the fundamental work of Struwe~\cite{s88}, Fardoun and Regbaoui~\cite{fr02,fr03} developed the theory of $p$-harmonic mapping flow and partially extended the results of Eells and Sampson~\cite{es64} to $p$-harmonic mappings. Concerning the (partial) regularity result for general Riemannian targets, Hardt and Lin~\cite{hl87}, Luckhaus \cite{Luckhaus88}, and Fuchs~\cite{f90} have extended the regularity result of Schoen and Uhlenbeck~\cite{su82} to minimizing $p$-harmonic mappings ($1<p<\infty$). More precisely, they proved that minimizing $p$-harmonic mappings (between compact smooth Riemannian manifolds) are locally $C^{1,\alpha}$ away from a singular set with Hausdorff dimension at most $n-[p]-1$, where the singular set is defined as
\begin{equation}\label{eq:def for singular set}
	S_u:=\Big\{a\in M: \limsup_{r\to 0} r^{p-n}\int_{B_r(a)}|\nabla u|^p\D \mu>0\Big\}.
\end{equation}
The structure of singular set has gained deeper understanding more recently in~\cite{hlw97,cn13,nvv16}. { As to weakly $p$-harmonic mappings, we would like to mention the interesting work of Fardoun and Regbaoui \cite{fr13}, where the authors  found a small constant $\ep_0$ such that  if $u:\Om\subset M\to N$ is a weakly $p$-harmonic mapping with $u(\Om)$ contained in a regular geodesic ball of radius $\ep_0$, then $u\in C^{1,\al}(\Om,N)$ for some $0<\al<1$.  This result  partially generalized the result of Hildbrandt et al. \cite{hkw77}. They also proved a uniqueness result for weakly $p$-harmonic mappings.}

In the spirit of Schoen and Uhlenbeck~\cite{su82}, Hardt and Lin~\cite{hl87}, Luckhaus~\cite{Luckhaus88} and Fuchs~\cite{f90} etc., it is natural to find geometric restrictions that exclude the singular set $S_u$ of a minimizing $p$-harmonic mapping $u\colon M\to N$. That is, we look for geometric conditions to ensure that each minimizing $p$-harmonic mapping is regular everywhere on $M$. In \cite[Theorem IV]{su82} and \cite[Theorem 4.5]{hl87}, the authors have developed some criteria to exclude the singular set for (minimizing) harmonic and $p$-harmonic mappings. As a corollary of their main results, Schoen and Uhlenbeck \cite[Corollary]{su82} proved that if either the target manifold $N$ has non-positive curvature or the image of a minimizing harmonic mapping lies in a strict convex ball in $N$, then the harmonic mapping is smooth. This is closely related to the earlier work of Eells and Sampson~\cite{es64} and  Hildebrandt, Kaul and Widman~\cite{hkw77}. In \cite[Theorem 4.5]{hl87}, it was proved that \emph{if each $p$-minimizing tangent mapping  from the unit ball in $\R^l$ into $N$ is constant for  $l=1,2,\dots,n$, then $S_u=\emptyset$ for each minimizing $p$-harmonic mapping $u\colon M\to N$}.

On the other hand, if we impose certain geometric restrictions on the manifold $N$ or on the image of $M$ under $u$, then some partial results for $S_u=\emptyset$ are well-known. In particular, when the image of $M$ of a {minimizing} $p$-harmonic mapping $u$ is contained in a regular geodesic ball in $N$, {the previous criteria of Hardt and Lin, together with the Liouville theorem proved by Fuchs~\cite{f89}, implies} that $S_u=\emptyset$ for each { minimizing} $p$-harmonic mapping $u\colon M\to N$ with $p\geq 2$; { see also related result by Fuchs \cite{f89} for stationary $p$-harmonic mappings}. If $N$ is simply connected and has non-positive sectional curvature, Wei and Yau~\cite{wy94} proved that each $p$-minimizing tangent mapping of $u$ from the unit ball in $\R^l$ into $N$ is constant for each $l=1,2,\dots,n$,   whenever it enjoys certain a priori regularity for $p\geq 2$ and so $S_u=\emptyset$ in this case by the criteria of Hardt and Lin.

In view of the above-mentioned works, two interesting and basic questions regarding the regularity theory of $p$-harmonic mapping between Riemannian manifolds can be formulated as follows:

\begin{description}
	\item[Regularity Question (NPC)]  \emph{Are $p$-harmonic mappings $u\colon M\to N$, $1<p<\infty$, necessarily locally $C^{1,\alpha}$ if $N$ is simply connected and has non-positive sectional curvature?}
	\smallskip
	
\item[Regularity Question (Regular ball)] \emph{Are $p$-harmonic mappings $u\colon M\to N$, $1<p<\infty$, necessarily locally $C^{1,\alpha}$ if $u(M)$ is contained in a regular geodesic ball $B_R(P)\subset N$?}
\end{description}

In the present note, we shall provide (partial) affirmative answers to the above two questions. Before stating our main results, let us point out some difficulties that will occur and our strategies and innovations. For the first problem, that is,  when $N$ has non-positive curvature, the regularity method of Hardt and Lin~\cite{hl87} (and also~\cite{Luckhaus88,f90}) necessarily generates singular sets for minimizing $p$-harmonic mappings $u\colon M\to N$, and the criteria mentioned above (to deduce that the singular set $S_u$ is empty) seems not { to be} working directly without any further a priori regularity assumption for $u$. The argument of Schoen and Uhlenbeck ~\cite[Corollary]{su82} also fails in our setting as composition of (square of) the distance function with a $p$-harmonic mapping fails in general to be a sub-$p$-harmonic function. To overcome these difficulties, we will combine some ideas from Gromov-Schoen \cite{gs92}. 
For the second problem,
we revisit the famous paper of Hildebrandt, Kaul and Widman \cite{hkw77} and apply some delicate estimates on curvatures to derive an important Caccioppoli type inequality, from which follows a Liouville type theorem for $p$-harmonic mappings from $\R^l$ to the regular geodesic ball. Then, the criteria of Hardt-Lin \cite{hl87} for singular set applies.

After answering the above two problems, we shall {furthermore} derive some estimates on gradient of $C^1$-smooth weakly harmonic mappings. These estimates will {lead} to certain Liouville type theorem for $p$-harmonic mappings on complete non-compact Riemannian manifolds with nonnegative curvature.

\subsection{Main results}\label{subsec:main results}
The setting of our problems is as follows. Let $M$ be an $n$-dimensional $C^2$-smooth Riemannian manifold with boundary $\partial M$ and $N$ a complete $C^2$-smooth Riemannian manifold. For simplicity, we assume that $N=(N,h)$ is isometrically embedded into some Euclidean space $\R^k$. Throughout this paper, we assume that $p\in (1,\infty)$.

Fix a domain $\Omega\subset M$. The Sobolev space $W^{1,p}(\Omega,N)$, $1<p<\infty$, is defined as
\begin{equation*}\label{eq:def for Sobolev}
	W^{1,p}(\Omega,N):=\Big\{u\in W^{1,p}(\Omega,\R^k): u(x)\in N \text{ for a.e.  } x\in \Omega \Big\},
\end{equation*}
where $W^{1,p}(\Omega,\R^k)$ is the usual $\R^k$-valued Sobolev space. For $u,v\in W^{1,1}(\Omega,\R^k)$, the inner product $\langle \nabla u,\nabla v\rangle$ is well-defined for almost every point on $\Omega$ by
$$\langle \nabla u,\nabla v\rangle=\sum_{\alpha,\beta}g^{\alpha\beta} \frac{\partial u}{\partial x^\alpha}\cdot \frac{\partial v}{\partial x^\beta}
$$
where $g^{\alpha\beta}=[g_{\alpha\beta}]^{-1}$ is the inverse of the matrix representing the metric $g$ of $M$ in local coordinates $x_1,\cdots,x_n$. The energy density for $u\in W^{1,p}(\Omega,N)$ is defined as
$$e_p(u)=|\nabla u|^p:=\langle \nabla u,\nabla u\rangle^{p/2}$$
and the $p$-energy of $u$ is given by
$$E_p(u)=\int_{\Omega}|\nabla u|^pd\mu.$$

A mapping $u\in W^{1,p}(\Omega,N)$ is said to be \emph{weakly $p$-harmonic} if it is a critical point of $E_p(u)$ with respect to variations in the target manifold $N$.  In particular, for any compactly supported vector field $\psi\in W^{1,p}_0(M,\R^k)\cap L^\infty(M,\R^k)$ with $\psi (x)\in T_{u(x)}N\subset\R^k$ for a.e. $x\in \Om$, we let $u_t=\exp_{u(x)}\big(t\psi(x) \big)$. Then, it holds
\begin{equation}\label{eq:variation for weakly p-harmonic}
\frac{d}{dt}\Big|_{t=0}E_p(u_t)=\int_{\Omega}\langle |\nabla u|^{p-2}\nabla u, \nabla \psi\rangle d\mu=0.
\end{equation}
 {We mention that another equivalent way to define weakly $p$-harmonic maps applies the nearest point mapping of $N$. Let $\Pi: N_{\delta} \to N$ be the nearest point projection, where $N_{\delta}$ is a small tubular neighborhood of $N$. Then $u$ is a  weakly $p$-harmonic mapping if $\frac{\D}{\D t}E_p(\Pi(u+t\var))=0$ for all $\var\in C_0^{\wq}(\Om, \R^k)$. This leads to the Euler-Lagrange equation satisfied by $u$  }  as follows:
$$-\text{div}\big(|\nabla u|^{p-2}\nabla u\big)=|\nabla u|^{p-2}A(u)(\nabla u,\nabla u),$$
where $A$ is the second fundamental form of $N$ in $\R^k$, see e.g. \cite{fr13}.

If,  in addition, $u$ is a critical point with respect to  variations in the domain, then it is called a \emph{stationary $p$-harmonic mapping}. That is,  a stationary $p$-harmonic mapping $u$  is a weakly $p$-harmonic mapping which also satisfies
$$\frac{d}{dt}\Big|_{t=0}E_p\Big(u\big(\exp_x(t\xi(x)) \big) \Big)=\frac{d}{dt}\Big|_{t=0}\int_{\Omega} \Big|\nabla u\big(\exp_x(t\xi(x)) \big) \Big|^p d\mu=0$$ for every smooth compactly supported vector field $\xi\colon M\to TM$.


Finally, a mapping $u\in W^{1,p}(\Omega,N)$ is called \emph{minimizing $p$-harmonic}, if
$$E_p(u|_{\Omega'})\leq E_p(v|_{\Omega'})$$
for every relatively compact domain $\Omega'\subset \Omega$ and every $v\in W^{1,p}(\Omega,N)$ with the same trace as $u$ on $\partial\Omega'$.
 That is, $u-v\in W^{1,p}_0(\Omega',\mathbb{R}^k)$ holds. { Note that  minimizing $p$-harmonic mappings are automatically  stationary  $p$-harmonic mappings.}

Our first main result reads as follows.
\begin{theorem}\label{thm:main theorem}
Each stationary $p$-harmonic mapping $u\colon \Omega \to N$, whose image lies in a compact subset of $N$, is locally $C^{1,\alpha}$ for some $\alpha\in (0,1)$ if $N$ is simply connected and has non-positive sectional curvature.
\end{theorem}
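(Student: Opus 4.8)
The plan is to combine the two classical pillars of the partial regularity theory for stationary $p$-harmonic mappings --- a scaled-energy monotonicity formula and an $\ep$-regularity theorem --- with a Liouville-type statement, valid precisely when $N$ is a Cartan--Hadamard manifold, that rules out nonconstant tangent mappings; this will force the singular set $S_u$ from \eqref{eq:def for singular set} to be empty. First I would derive the monotonicity of $\Theta_u(a,r):=r^{p-n}\int_{B_r(a)}|\na u|^p\,d\mu$ in $r$, by inserting into the inner-variation identity \eqref{eq:variation for weakly p-harmonic} the radial fields $\xi(x)=\phi(|x-a|)(x-a)$ and letting $\phi$ tend to $\mathbf{1}_{[0,r]}$; stationarity is exactly what is needed here. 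Next I would invoke an $\ep$-regularity theorem: there is $\ep_0=\ep_0(n,p,N)>0$ so that $\Theta_u(a,2r)<\ep_0$ implies $u\in C^{1,\al}(B_r(a))$ with scale-invariant bounds. Since $u(\Om)$ lies in a compact, hence bounded, subset of the Cartan--Hadamard manifold $N$, it lies in a geodesic ball that is automatically \emph{regular} (the upper curvature bound $\kappa\le 0$ makes the defining inequality vacuous); moreover smallness of $\Theta_u(a,2r)$ forces $\osc_{B_r(a)}u$ to be small, so $u(B_r(a))$ sits inside an arbitrarily small geodesic ball, placing us inside the regime where regular-ball techniques such as those of \cite{hkw77,f89} are effective. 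Modulo this $\ep$-regularity, Theorem~\ref{thm:main theorem} reduces to proving $S_u=\emptyset$.

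To show $S_u=\emptyset$ I would argue by contradiction using an iterated blow-up. If $a\in S_u$, the rescalings $u_{a,\lambda}(x)=u(a+\lambda x)$ are again stationary $p$-harmonic with $\Theta$ preserved, and by the monotonicity formula together with a compactness lemma they subconverge to a tangent mapping $\phi\colon\R^m\to N$ that is homogeneous of degree zero; a Federer-type dimension-reduction argument then produces such a $\phi$ that is moreover smooth on $\R^m\setminus\{0\}$, with $2\le m\le n$, equivalently a \emph{smooth} nonconstant $p$-harmonic mapping $\psi:=\phi|_{S^{m-1}}$ of the round sphere into $N$. It remains to rule $\psi$ out. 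When $m-1=1$, the inner variation on $S^1$ forces $|\psi'|$ to be constant, so either $\psi$ is constant or it is a nonconstant closed geodesic of $N$; the latter is impossible because $N$, being simply connected with non-positive curvature, is Cartan--Hadamard and carries no nonconstant closed geodesic. When $m-1\ge 2$, the sphere $S^{m-1}$ has strictly positive Ricci curvature while $N$ has non-positive sectional curvature; substituting this into the Bochner--Eells--Sampson identity for smooth $p$-harmonic mappings (compare \cite{wy94}) and integrating over $S^{m-1}$ makes every term of one sign, which forces $\na\psi\equiv 0$. Either way $\psi$ is constant, hence $\phi$ is constant, contradicting $a\in S_u$. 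Thus $S_u=\emptyset$, and a covering argument upgrades the conclusion to $u\in C^{1,\al}_{\mathrm{loc}}(\Om)$.

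The main obstacle, I expect, is carrying out the monotonicity/$\ep$-regularity/dimension-reduction scheme for \emph{stationary} --- not merely minimizing --- $p$-harmonic mappings and for \emph{all} $p\in(1,\infty)$, since here the minimizing-comparison arguments of \cite{su82,hl87,Luckhaus88,f90} are unavailable and, as the introduction stresses, composing the squared distance function on $N$ with $u$ no longer produces a sub-$p$-harmonic function, so the usual route to $\ep$-regularity is blocked. Concretely, the delicate points are: (i) the compactness (strong $W^{1,p}_{\mathrm{loc}}$-convergence of the rescalings $u_{a,\lambda}$) underpinning the blow-up analysis; (ii) the $\ep$-regularity theorem itself for stationary $p$-harmonic mappings, where the Hardy-space/Coulomb-gauge machinery available for $p=2$ must be replaced by a careful use of the convexity geometry of the small target balls in the non-positively curved region; and (iii) the a priori smoothness of the bottom tangent mapping $\psi$ --- exactly the regularity that earlier approaches (e.g.\ \cite{wy94}) had to assume --- which here must be extracted from the $\ep$-regularity theorem on the link together with the fact that dimension reduction drives $m$ down until the singular set of $\phi$ is $\{0\}$. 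A further technical point is the degeneracy of the $p$-Laplacian in the Bochner formula, which must be handled on the (possibly nonempty but lower-dimensional) critical set of $\psi$. Once these are in place, the Cartan--Hadamard/Bochner step closes the loop.
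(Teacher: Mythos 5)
Your scheme (monotonicity $+$ $\ep$-regularity $+$ blow-up/dimension reduction $+$ a Liouville theorem for tangent maps) is the classical Schoen--Uhlenbeck/Hardt--Lin route, and it founders on exactly the points you flag as ``delicate'' but do not resolve. For \emph{stationary} (as opposed to minimizing) $p$-harmonic mappings with $p\neq 2$ into a general target, neither the $\ep$-regularity theorem nor the strong $W^{1,p}_{\mathrm{loc}}$-compactness of the rescalings $u_{a,\lambda}$ is available: the comparison-map constructions of \cite{su82,hl87,Luckhaus88,f90} that yield both ingredients use minimality in an essential way, and the Hardy-space/moving-frame machinery that rescues the stationary case when $p=2$ has no known analogue for the degenerate $p$-Laplacian. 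Since your dimension-reduction step, the a priori smoothness of the bottom tangent map $\psi$ (which is exactly the hypothesis Wei--Yau had to assume), and even the claim that smallness of $\Theta_u(a,2r)$ controls $\osc_{B_r(a)}u$ all presuppose these two inputs, the argument as written is circular at its core; the introduction of the paper makes precisely this point when it notes that the Hardt--Lin criteria ``seem not working directly without any further a priori regularity assumption.''

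The paper's proof avoids this machinery entirely. Following Gromov--Schoen, it exploits the convexity of $f_Q=d^2(\cdot,Q)$ on a Cartan--Hadamard manifold, namely $\nabla(\nabla f_Q)(v,v)\geq 2\|v\|^2$, to derive the weak differential inequality of Lemma~\ref{lemma:equation}, which bounds $\int_{B_r}|\nabla u|^p$ by a boundary integral involving $|\nabla u|^{p-2}\nabla f_Q$; combining this with the stationary monotonicity formula (Lemma~\ref{lemma:monotone equation}) and H\"older plus Poincar\'e on $\partial B_r$, one closes the estimate to get $rE'(r)\geq (n-p+p\gamma)E(r)$, i.e.\ the Morrey decay $E(r)\lesssim r^{n-p+p\gamma}$ of Lemma~\ref{lemma:key monotone inequality} at \emph{every} point. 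This yields H\"older continuity everywhere --- no singular set ever appears and no blow-up is needed --- after which the standard theory for continuous $p$-harmonic maps upgrades to $C^{1,\alpha}$. To salvage your approach you would first have to prove $\ep$-regularity and blow-up compactness for stationary $p$-harmonic maps with $p\neq 2$, which is a substantially harder (and currently open) problem than the theorem itself.
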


As commented earlier, Theorem \ref{thm:main theorem} can be viewed as a natural extension of the regularity result of Eells and Sampson~\cite{es64} or~\cite[Corollary]{su82} for harmonic mappings into Riemannian manifolds with non-positive curvature.

To prove Theorem \ref{thm:main theorem}, the main idea is to derive a Morrey type estimate (see Lemma~\ref{lemma:key monotone inequality}) of a $p$-harmonic mapping $u\colon \Omega\to N$. Then it follows immediately that $u$ is locally $C^{0,\alpha}$ and the standard regularity theory (see e.g. Hardt-Lin~\cite[Section 3]{hl87}) gives the desired local $C^{1,\alpha}$ regularity. The approach is inspired by an idea for proving Lipschitz regularity of harmonic mappings into singular metric spaces, due to Gromov and Schoen~\cite{gs92}. More precisely, we follow the idea of Gromov-Schoen~\cite{gs92} to consider the composed function $f_Q:=d^2(u,Q)$ {for a given point $Q\in N$}, and derive {a} certain weak differential inequality (see Lemma~\ref{lemma:equation} below) that relates the $p$-energy of $u$ and the gradient of $f_Q$, which allows us to control the $p$-energy from above by (a constant multiple of) the integration of $|\nabla u|^{p-2}|\nabla f_Q|$ over $\partial B(a,r)$. Then we  use H\"older's inequality and Poincar\'e's inequality to estimate the $p$-energy of $u|_{B(a,r)}$ from above. A crucial technical point here is to use (a Riemannian version of) the monotonocity formula for stationary $p$-harmonic mappings due to Hardt-Lin~\cite{hl87} (see Lemma~\ref{lemma:monotone equation} below).

We next recall the definition of regular geodesic ball from~\cite{hkw77}.
\begin{definition}[Regular geodesic ball]\label{def:regular ball}
	Let $B_R(P)\subset N$ be a geodesic ball centered at $P$ with radius $R$. Let $C(P)$ be the cut locus of its center $P$. We say that $B_R(P)$ is a regular geodesic ball if $B_R(P)\cap C(P)=\emptyset$ and $R<\frac{\pi}{2\sqrt{\kappa}}$, where $\kappa\geq 0$ is an upper bound for the sectional curvature of $N$ on the ball $B_R(P)$ and if it lies within normal range of all of its points.
\end{definition}

Our second main result answers affirmatively the second regularity question, extending~\cite[Theorem {3}]{hkw77} to minimizing $p$-harmonic mappings.
\begin{theorem}\label{thm:main theorem 2}
	Each minimizing $p$-harmonic mapping $u\colon \Omega \to N$, whose image $u(M)$ is contained in a regular geodesic ball  $B_R(P)\subset N$, is locally $C^{1,\alpha}$ for some $\alpha\in (0,1)$.
\end{theorem}

 { Recall that if $N$ is simply connected and has nonpositive sectional curvature, then $N$ is diffeomorphic to the Euclidean space $\R^{{\rm dim} N}$ by the Cartan-Hadamard theorem. Consequently, any ball of finite radius in $N$ is a regular geodesic ball. In this case,  Theorem~\ref{thm:main theorem 2} follows from  Theorem~\ref{thm:main theorem}. }

The main arguments leading to Theorem~\ref{thm:main theorem 2} are due to Hardt-Lin~\cite{hl87} and Fuchs~\cite{f88}. More precisely, in~\cite{f88}, Fuchs has shown that each $p$-harmonic mapping $u$ from $\R^l$ to a regular geodesic ball $B_R(P)\subset N$ is constant for $l=1,2,\cdots$ when $p\geq 2$. His idea actually works for the case $p\in (1,2)$. However, to overcome some additional difficulties that occurs when deriving the crucial Caccioppoli inequality, some delicate estimates of~\cite{hkw77} will be carefully and repeatedly applied.


Next we further estimate the gradient  of $C^1$-smooth weakly $p$-harmonic mappings. In~\cite[Theorem 2.2]{s84}, Schoen proved that there exists an $\varepsilon>0$ depending only on $n$, $g$ and $N$ such that if $u\colon B_r\to N$  is (minimizing) harmonic with $r^{2-n}\int_{B_r}|\nabla u|^2d\mu<\varepsilon$, then
$$\sup_{B_{r/2}}|\nabla u|^2\leq C\dashint_{B_r}|\nabla u|^2d\mu.$$
When $N$ is assumed to be non-positively curved, the gradient estimate as above still holds if we drop the smallness  assumption  on the normalized energy; see~\cite[Theorem 2.4]{gs92}.

This result was improved later by Korevaar and Schoen~\cite[Theorem 2.4.6]{ks93} in the following form: \emph{Let $\Omega$ be a smooth bounded domain of a Riemannian manifold $M$ and $N$ non-positively curved in the sense of Alexandrov. Suppose $u\colon \Omega\to N$ is minimizing harmonic. Then for any ball $B_R(o)$ with $B_{2R}(o)\subset\subset \Omega$, there exists a constant $C$ depending only on $n=\dim(M)$, $R$, the injectivity radius of $o$ and the $C^1$-norm of $g$ on $B_{2R}(o)$ such that}
$$\sup_{B_R(o)} |\nabla u|\leq C\dashint_{B_{2R}}|\nabla u|d\mu\leq C\Big(\dashint_{B_{2R}}|\nabla u|^2d\mu \Big)^{1/2}. $$
The dependence of the constant $C$ was further improved by Zhang, Zhong and Zhu in their very recent work~\cite{zzz17}\footnote{Indeed, the authors obtained quantitative gradient estimates for minimizing harmonic mappings from Riemannian manifolds with non-negative Ricci curvature into metric spaces with non-positive curvature in the sense of Alexandrov, which is much more general than the setting of Korevaar and Schoen.}.

Concerning the quantitative gradient estimate for stationary $p$-harmonic mappings, Duzaar and Fuchs proved in~\cite[Theorem 2.1]{df90} that, there exist $\varepsilon$ and $C$ depending only on $n$, $p$ and the curvature bound of $N$, such that if $u\colon B_r\to N$ is $C^1$-smooth weakly $p$-harmonic ($p\geq 2$) with the smallness condition $r^{p-n}\int_{B_r}|\nabla u|^pd\mu<\varepsilon$, then
$$\sup_{B_{r/2}}|\nabla u|^p\leq C \dashint_{B_r}|\nabla u|^pd\mu.$$

In this paper, we establish the quantitative gradient estimate for $C^1$-smooth weakly $p$-harmonic mappings when $M$ has non-negative Ricci curvature and $N$ is simply connected and has non-positive sectional curvature. As in the harmonic case~\cite{ks93}, the smallness condition for the normalized $p$-energy is unnecessary. Our third main result of this paper reads as follows.

\begin{theorem}\label{thm:gradient estimates}
	Assume that $M$ has non-negative Ricci curvature and $N$ is simply connected and has non-positive sectional curvature. Let $u\colon M\to N$ be a $C^1$-smooth weakly $p$-harmonic mapping. Then there exists a constant $C$, depending only on $n=\dim M$, such that for each  ball $B_r:=B_r(o)$ with $B_{2r}(o)\subset\subset M$, we have
	\begin{equation}\label{eq:gradient estimates}
	\sup_{B_{r}}|\nabla u|^{p-1}\leq C\dashint_{B_{2r}}|\nabla u|^{p-1}d\mu\leq C\Big(\dashint_{B_{2r}}|\nabla u|^{p}d\mu\Big)^{(p-1)/p}.
	\end{equation}
\end{theorem}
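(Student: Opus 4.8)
The plan is to establish a Bochner-type inequality for the function $v:=|\nabla u|^p$ (or, more precisely, to work with a subsolution to a suitable elliptic equation) and then run a Moser iteration, exploiting the non-negativity of the Ricci curvature of $M$ and the non-positivity of the sectional curvature of $N$ to absorb all the curvature terms with a favorable sign. Since $u$ is assumed $C^1$-smooth and weakly $p$-harmonic, elliptic regularity for the (degenerate) $p$-Laplace system gives that $u$ is in fact $C^{1,\alpha}_{\mathrm{loc}}$, and away from the critical set $\{\nabla u=0\}$ the equation is uniformly elliptic, so $u$ is smooth there and classical Bochner computations apply; the degeneracy at $\{\nabla u=0\}$ will be handled by the usual regularization $|\nabla u|^2\rightsquigarrow (|\nabla u|^2+\varepsilon)$ and a limiting argument.

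First I would write out the Bochner formula for the $p$-energy density. For a smooth $p$-harmonic map, differentiating the equation $\operatorname{div}(|\nabla u|^{p-2}\nabla u)=|\nabla u|^{p-2}A(u)(\nabla u,\nabla u)$ and contracting, one obtains (on the open set where $\nabla u\neq 0$) an inequality of the shape
\begin{equation*}
\operatorname{div}\!\big(|\nabla u|^{p-2}\nabla |\nabla u|^2\big)\ \geq\ 2|\nabla u|^{p-2}\big|\nabla |\nabla u|\big|^2\;+\;2|\nabla u|^{p-2}\big(\operatorname{Ric}^M(\nabla u,\nabla u)-\langle R^N(\cdot)\cdots\rangle\big),
\end{equation*}
where the $\operatorname{Ric}^M$ term is $\geq 0$ by hypothesis and the target-curvature term has the right sign because $N$ has non-positive sectional curvature, so both drop out. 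The precise constants are not important; what matters is that, setting $w:=|\nabla u|^{p-1}$, one derives that $w$ is a subsolution of a divergence-form equation $\operatorname{div}(\mathcal{A}(x)\nabla w)\geq 0$ (or $\Delta_p$-type inequality) whose structure constants depend only on $p$ — and, crucially, a reverse-Poincaré/Caccioppoli inequality of the form $\int_{B_r}|\nabla u|^{p-2}|\nabla(|\nabla u|)|^2\,\eta^2\,d\mu\lesssim \int_{B_{2r}}|\nabla u|^p|\nabla\eta|^2\,d\mu$. This is the step where the two curvature assumptions are consumed, and it is the heart of the matter.

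Next I would run Moser iteration on $w=|\nabla u|^{p-1}$. Because $\operatorname{Ric}^M\geq 0$, the Bishop–Gromov inequality gives volume doubling and, together with the (geodesic-ball) Poincaré inequality, the scale-invariant Sobolev inequality on $B_{2r}$ with constants depending only on $n=\dim M$ — this is precisely why the final constant $C$ depends on $n$ alone, with no dependence on $r$, on the injectivity radius, or on the $C^1$-norm of $g$ (the improvement over Korevaar–Schoen, in the spirit of~\cite{zzz17}). Standard Moser iteration then yields $\sup_{B_r}w\leq C\big(\dashint_{B_{2r}}w^{q}\,d\mu\big)^{1/q}$ for some $q>0$; to bring the exponent down to $q=1$ one invokes the usual mean-value/weak-Harnack trick for non-negative subsolutions (e.g. a $\log$-estimate argument, again using only the doubling and Poincaré properties), giving $\sup_{B_r}|\nabla u|^{p-1}\leq C\dashint_{B_{2r}}|\nabla u|^{p-1}\,d\mu$. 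The final inequality in~\eqref{eq:gradient estimates} is then just Hölder's inequality with exponents $\frac{p}{p-1}$ and $p$: $\dashint_{B_{2r}}|\nabla u|^{p-1}\,d\mu\leq\big(\dashint_{B_{2r}}|\nabla u|^{p}\,d\mu\big)^{(p-1)/p}$.

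The main obstacle I anticipate is the degeneracy of the $p$-Laplace system at points where $\nabla u$ vanishes, which is exactly where the Bochner computation breaks down and where $v=|\nabla u|^p$ is only $C^{0,\alpha}$ rather than smooth. I would deal with this by working with the regularized energy densities $v_\varepsilon=(|\nabla u|^2+\varepsilon)^{p/2}$: away from the critical set everything is smooth, so one derives the differential inequality for $v_\varepsilon$ there, checks that the critical set is closed with measure zero and that the distributional inequality holds across it (using that $u\in C^{1,\alpha}$ and testing against non-negative $W^{1,p}_0$ functions supported near, but the contributions from a neighborhood of the critical set are controlled and vanish as the neighborhood shrinks), and finally lets $\varepsilon\to 0$. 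A secondary technical point is keeping careful track, throughout the iteration, of the fact that all Sobolev/Poincaré constants come packaged from $\operatorname{Ric}^M\geq 0$ alone, so that no geometric quantity other than $n$ leaks into $C$; the simply-connectedness and non-positive curvature of $N$ enter only once, in the sign of the target-curvature term in the Bochner formula (and implicitly guarantee, via Cartan–Hadamard, that $d^2(\cdot,\cdot)$ and the relevant comparison estimates behave well, so there is no topological obstruction to the pointwise computation).
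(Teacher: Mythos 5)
Your overall strategy---Bochner formula, use of the two curvature signs to discard the remainder term, a subsolution property for $|\nabla u|^{p-1}$, and a mean-value inequality whose constant comes only from $\mathrm{Ric}_M\geq 0$ via Bishop--Gromov, finished off by H\"older---is the same as the paper's. But there is a genuine gap at the pivotal step where you pass from the Bochner computation to the claim that $w=|\nabla u|^{p-1}$ is ``a subsolution of a divergence-form equation $\operatorname{div}(\mathcal A(x)\nabla w)\geq 0$ whose structure constants depend only on $p$.'' What the Bochner identity naturally produces is the \emph{weighted} inequality $\operatorname{div}\bigl(|\nabla u|^{p-2}\nabla|\nabla u|^2\bigr)\geq(\text{good terms})$, and the weight $|\nabla u|^{p-2}$ degenerates (for $p>2$) or blows up (for $p<2$) on the critical set. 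Moser iteration for such an operator does not come with constants depending only on $n$ and $p$ unless one already controls the weight---which is exactly the quantity being estimated---so the iteration step, as written, is circular. Relatedly, your $\varepsilon$-regularization $(|\nabla u|^2+\varepsilon)^{p/2}$ is not obviously a subsolution of anything without second-derivative information on $u$ across the critical set, and a $C^1$ weakly $p$-harmonic map is not smooth there.

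The paper closes precisely this gap by showing that $|\nabla u|^{p-1}$ is subharmonic for the \emph{unweighted} Laplace--Beltrami operator $\Delta_g$. The mechanism is: write $\tfrac12\Delta\bigl(|du|^{2(p-1)}\bigr)=|du|^{p-1}\Delta(|du|^{p-1})+\bigl|\nabla|du|^{p-1}\bigr|^2$ and use the Kato-type inequality $\bigl|\nabla|du|^{p-1}\bigr|^2\leq\bigl|\nabla(|du|^{p-2}du)\bigr|^2$ to absorb the full Hessian term of the Bochner identity; after dividing by $|du|$ on $\Omega_+=\{|\nabla u|>0\}$ one is left with $\Delta(|du|^{p-1})\geq|du|^{-1}\langle\Delta(|du|^{p-2}du),du\rangle$, and the right-hand side integrates to zero against $\eta\geq0$ because $\eta|du|^{-1}$ is an admissible multiplier for the weak $p$-harmonic equation. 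Making that test admissible is where the $W^{2,2}$ estimates are genuinely needed (one must know $|du|^{-1}\in W^{1,2}_{\mathrm{loc}}\cap L^{\infty}_{\mathrm{loc}}(\Omega_+)$, supplied by Duzaar--Fuchs \cite{df90} for $p\geq2$ and by Proposition \ref{prop: second order derivatives} for $1<p<2$); the subharmonicity is then extended across $\{\nabla u=0\}$ by the removability lemma \cite[Lemma 2.4]{df90} rather than by regularization. Once $\Delta_g(|\nabla u|^{p-1})\geq0$ is established distributionally on all of $\Omega$, the mean-value inequality for non-negative subharmonic functions under $\mathrm{Ric}\geq0$ (proved by exactly the Moser iteration you describe) yields the estimate with $C=C(n)$, and H\"older gives the last inequality. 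If you supply the Kato-inequality reduction and the $W^{1,2}$ control of $|du|^{-1}$, your argument becomes the paper's.
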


The proof of Theorem~\ref{thm:gradient estimates} follows closely the idea of Schoen and Yau~\cite{sy76}, which relies crucially on the Bochner-Weitzenb\"ock formula (due to Eells and Sampson~\cite{es64}). However, the degeneracy of $p$-harmonicity for $p\neq 2$ causes some extra technical difficulty. We tackle this difficulty by adapting some ideas from Duzaar and Fuchs~\cite[Proof of Theorem 2.1]{df90}, where the authors deal mainly with the case $p\geq 2$.

As an immediate consequence of Theorem~\ref{thm:gradient estimates}, we obtain the following Liouville-type theorem, which extends the classical result of Schoen and Yau \cite[Theorem 1.4]{sy76} for harmonic mappings to the setting of $p$-harmonic mappings.

\begin{corollary}\label{coro:Liouville from gradient estimate}
	Let $M=(M,g)$ be an $n$-dimensional complete non-compact Riemannian manifold with nonnegative Ricci curvature and $N$ a simply connected Riemannian manifold with non-positive sectional curvature. Then any $C^1$-smooth weakly $p$-harmonic mapping $u\colon M\to N$ with finite $(p-1)$ or $p$-energy must be constant.
\end{corollary}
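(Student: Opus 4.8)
The plan is to deduce Corollary~\ref{coro:Liouville from gradient estimate} directly from the quantitative gradient estimate in Theorem~\ref{thm:gradient estimates} by letting the radius tend to infinity. First I would fix a point $o\in M$; since $M$ is complete and non-compact, for every $r>0$ the ball $B_{2r}(o)$ is relatively compact in $M$, so Theorem~\ref{thm:gradient estimates} applies on each such ball and yields, with a constant $C=C(n)$ independent of $r$,
\begin{equation*}
\sup_{B_r(o)}|\nabla u|^{p-1}\leq C\dashint_{B_{2r}(o)}|\nabla u|^{p-1}\,d\mu
\leq C\,\mu(B_{2r}(o))^{-1}\int_M |\nabla u|^{p-1}\,d\mu
\end{equation*}
in the case of finite $(p-1)$-energy, and the analogous bound with the H\"older-improved right-hand side $C\bigl(\mu(B_{2r}(o))^{-1}\int_M|\nabla u|^p\,d\mu\bigr)^{(p-1)/p}$ in the case of finite $p$-energy.

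The key step is then to observe that under the hypothesis $\mathrm{Ric}_M\geq 0$, the Bishop--Gromov volume comparison theorem forces $\mu(B_{2r}(o))\to\infty$ as $r\to\infty$ (a complete non-compact manifold with nonnegative Ricci curvature has infinite volume, indeed $\mu(B_R(o))\gtrsim R$). Since the numerator $\int_M|\nabla u|^{p-1}\,d\mu$ (resp.\ $\int_M|\nabla u|^p\,d\mu$) is a finite constant by hypothesis, the right-hand side of the displayed inequality tends to $0$ as $r\to\infty$. For fixed $x\in M$ and all $r$ large enough we have $x\in B_r(o)$, hence $|\nabla u(x)|^{p-1}\leq \sup_{B_r(o)}|\nabla u|^{p-1}\to 0$, so $\nabla u(x)=0$. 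As $x$ was arbitrary, $\nabla u\equiv 0$ on $M$, and since $M$ is connected this means $u$ is constant.

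The only mild subtlety—and the closest thing to an obstacle—is the volume-growth input: one must invoke that a complete non-compact Riemannian manifold with $\mathrm{Ric}\geq 0$ has $\mu(B_r(o))\to\infty$, which is classical (Calabi--Yau linear volume growth lower bound, or simply Bishop--Gromov plus non-compactness) but is external to the excerpt; I would cite it as a standard fact. A secondary point worth a remark is that Theorem~\ref{thm:gradient estimates} is stated for $C^1$-smooth weakly $p$-harmonic $u$ on all of $M$, which is exactly the hypothesis of the corollary, so no additional regularity bootstrapping is needed. Everything else is the routine limiting argument sketched above; no delicate estimates are required beyond what Theorem~\ref{thm:gradient estimates} already provides.
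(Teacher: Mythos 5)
Your proposal is correct and follows exactly the paper's own (one-line) argument: apply the gradient estimate \eqref{eq:gradient estimates} on $B_{2r}(o)$ and let $r\to\infty$, using that $\mu(M)=\infty$ for a complete non-compact manifold with $\mathrm{Ric}\geq 0$. You merely spell out the volume-growth input (Calabi--Yau/Bishop--Gromov) that the paper takes for granted, which is a welcome addition but not a different route.
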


\begin{proof}
	As $\mu(M)=\infty$ {(see e.g. \cite[Theorem 4.1 of Chapter I]{sy94})}, the result follows from~\eqref{eq:gradient estimates} by sending $r$ to infinite.
\end{proof}

Note that, under the assumption of Corollary~\ref{coro:Liouville from gradient estimate}, Nakauchi~\cite{n98} proved that any $C^1$-smooth weakly $p$-harmonic mapping $u\colon M\to N$ with finite $p$-energy must be constant for $p\geq 2$ via a different approach. Corollary~\ref{coro:Liouville from gradient estimate} extends this result to all $p\in (1,\infty)$.

\subsection{Structure of the paper}

This paper is structured as follows. The proofs of Theorem~\ref{thm:main theorem} and Theorem~\ref{thm:main theorem 2} are given in Section~\ref{sec:proof main thm} and Section~\ref{sec:proof main thm 2}, respectively. In Section~\ref{sec:gradient estimate}, we prove  Theorem~\ref{thm:gradient estimates}. The final section, Section~\ref{sec:concluding remarks}, contains some comments about our general method and possible extensions to mappings into more general metric spaces. We also include an appendix, establishing $W^{2,2}$ regularity estimates for weakly $p$-harmonic mappings in the case $1<p<2$, and as a byproduct, we extend the main results of Duzaar and Fuchs~\cite{df90} on gradient estimates and removable singularity of weakly $p$-harmonic mappings to the case $1<p<2$.

Our notation of various concepts is rather standard. Whenever we write $A(r)\lesssim B(r)$, it means that there exists a positive constant $C$, independent of $r$, such that $A(r)\leq CB(r)$.

\section{Proof of Theorem~\ref{thm:main theorem}}\label{sec:proof main thm}
In this section we assume that $N$ is simply connected and has non-positive sectional curvature and $\Omega\subset M$ is a domain.

Given a weakly $p$-harmonic mapping $u\colon M\to N$ whose image is contained in a compact subset of $N$, we will show in the following lemma that the composed function $d^2(u,Q)$ satisfies a weak differential inequality that relates the $p$-energy of $u$ and the gradient of $d^2(u,Q)$. In the (minimizing) harmonic case, this is due to Gromov and Schoen~\cite[Proposition 2.2]{gs92}.
\begin{lemma}\label{lemma:equation}
If $u\colon \Omega\to N$ is weakly $p$-harmonic with $u(\Omega)$ being contained in a compact subset of $N$, then for each $Q\in N$, the function $d^2(u,Q)$ satisfies the differential inequality
$$\int_{\Omega}|\nabla u|^{p-2}\big(2\eta|\nabla u|^2+\nabla \eta\cdot \nabla d^2(u,Q)\big)d\mu\leq 0 $$
for any $\eta\in C_0^\infty(\Omega)$.
\end{lemma}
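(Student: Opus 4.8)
The plan is to exploit the non-positive curvature of $N$ through the fact that, for a fixed point $Q\in N$, the squared distance function $\rho_Q:=d^2(\cdot,Q)$ is smooth (since $N$ is simply connected, complete and non-positively curved, hence a Hadamard manifold with no cut locus) and convex with $\Hess \rho_Q \geq 2h$ in the sense of quadratic forms. Writing $f_Q:=\rho_Q\circ u = d^2(u,Q)$, the chain rule for the energy density of a composition gives, formally,
\[
-\Delta_p f_Q \;\leq\; -\,2\,|\nabla u|^{p-2}\,|\nabla u|^2
\]
in the weak sense, where $\Delta_p$ denotes the (linearized along $u$) $p$-Laplacian. Concretely, I would test the weak $p$-harmonicity identity \eqref{eq:variation for weakly p-harmonic} with the vector field $\psi = \eta\,(\nabla \rho_Q)\circ u$ along $u$, which is an admissible $W^{1,p}_0\cap L^\infty$ vector field along $u$ because $u(\Omega)$ is relatively compact in $N$ (so $\rho_Q$ and all its derivatives along $u$ are bounded) and $\eta\in C_0^\infty(\Omega)$.

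The key computation is then to expand $\langle |\nabla u|^{p-2}\nabla u, \nabla\psi\rangle$. First, $\nabla\big(\eta\,(\nabla\rho_Q)\circ u\big) = (\nabla\eta)\otimes (\nabla\rho_Q)\circ u + \eta\,\nabla\big((\nabla\rho_Q)\circ u\big)$. For the first term, contracting with $\nabla u$ produces exactly $\nabla\eta\cdot \nabla f_Q$, since $\nabla f_Q = \big((\nabla\rho_Q)\circ u\big)\cdot \nabla u$ by the chain rule. For the second term, the derivative of $(\nabla\rho_Q)\circ u$ along a coordinate direction $x^\alpha$ is, by the chain rule on manifolds, $(\Hess\rho_Q)(u)\big(\tfrac{\partial u}{\partial x^\alpha}, \cdot\big)$ plus a term involving $(\nabla\rho_Q)\circ u$ composed with the second fundamental form / the Euler–Lagrange operator of $u$; that latter term, when contracted against $|\nabla u|^{p-2}\nabla u$, is precisely the tension-field contribution which vanishes because $u$ is weakly $p$-harmonic (this is where $p$-harmonicity is used, and it is the reason one gets an inequality rather than needing to carry extra terms). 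What remains from the second term is $\eta\,|\nabla u|^{p-2}\,\langle (\Hess\rho_Q)(u)(\nabla u,\nabla u)\rangle$, which by convexity $\Hess\rho_Q\geq 2h$ is bounded below by $2\eta\,|\nabla u|^{p-2}|\nabla u|^2$. Assembling: $0 = \int_\Omega \langle |\nabla u|^{p-2}\nabla u,\nabla\psi\rangle\,d\mu \geq \int_\Omega |\nabla u|^{p-2}\big(2\eta|\nabla u|^2 + \nabla\eta\cdot\nabla f_Q\big)\,d\mu$, which is exactly the claimed inequality.

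There are two points requiring care. First, the test vector field $\eta(\nabla\rho_Q)\circ u$ must be made rigorous as an element of $W^{1,p}_0(\Omega,\R^k)\cap L^\infty$ along $u$: since $N\hookrightarrow\R^k$ isometrically and $u(\Omega)$ lies in a compact set $K\subset N$, one extends $\nabla\rho_Q$ to a smooth compactly supported $\R^k$-valued map on a neighborhood of $K$ and notes that composition with $u\in W^{1,p}$ followed by multiplication by $\eta\in C_0^\infty$ stays in $W^{1,p}_0\cap L^\infty$ by the chain rule for Sobolev maps; the formula \eqref{eq:variation for weakly p-harmonic} was stated precisely for such $\psi$. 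Second, the "tension field vanishes" step should be phrased intrinsically: decomposing $\nabla\psi$ into the part tangent to $N$ along $u$ and the normal part, the normal part pairs with the second fundamental form to reproduce the right-hand side of the weakly $p$-harmonic equation $-\mathrm{div}(|\nabla u|^{p-2}\nabla u)=|\nabla u|^{p-2}A(u)(\nabla u,\nabla u)$, and the two contributions cancel — this is the standard Bochner-type bookkeeping and is the only genuinely technical step. I expect this bookkeeping (keeping track of which terms are tangential versus normal and confirming the cancellation against the $p$-harmonic map equation) to be the main obstacle; everything else is the convexity of $\rho_Q$ on a Hadamard manifold plus the chain rule. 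A cleaner alternative, which I would present if the direct computation gets unwieldy, is to first establish the identity for $C^1$ (or smooth) $u$ by the pointwise Bochner computation and then remove the regularity assumption by approximation, using the $W^{2,2}$-type estimates referenced in the appendix together with the compactness of $u(\Omega)$; but the direct weak testing above should suffice and is more in the spirit of Gromov–Schoen \cite{gs92}.
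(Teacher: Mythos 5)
Your proposal is correct and follows essentially the same route as the paper: test the weak $p$-harmonicity identity with $\psi=\eta\,(\nabla d^2(\cdot,Q))\circ u$ and invoke the Hessian lower bound $\nabla(\nabla d^2(\cdot,Q))\geq 2h$ on the Hadamard manifold $N$. The paper's only additional technicality is approximating $d^2(\cdot,Q)$ by compactly supported smooth functions $f_i$ in $W^{2,p}_{\mathrm{loc}}$ to justify admissibility of the test field, which you handle equivalently by extending $\nabla\rho_Q$ near the compact image $u(\Omega)$.
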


\begin{proof}
Since $u\colon \Omega\to N$ is weakly $p$-harmonic, for any compactly supported vector field  $\psi \in W^{1,p}_0(M,\R^k)\cap L^\infty(M,\R^k)$  with $\psi (x)\in T_{u(x)}N\subset\R^k$ for a.e. $x\in \Om$,  we have
\begin{equation}\label{eq:for key lemma}
0=\frac{d}{dt}\Big|_{t=0}E_p(u_t)=\int_{\Omega}\langle |\nabla u|^{p-2}\nabla u, \nabla \psi\rangle d\mu,
\end{equation}
where $u_t(x)=\exp_{u(x)}\big(t\psi(x)\big)$.

Given $\eta\in C_0^\infty(\Omega)$ and $Q\in N$, we denote by $f:=d^2(x,Q)$. { Then $f\in C^2(N,\R)$. Since $u(\Omega)$ is contained in a compact subset of $N$, we may find a $C^2$-function $\bar{f}\colon \R^k\to \R$ with compact support in $\R^k$ such that $\bar{f}$ coincides with $f$ on $\overline{u(\Omega)}$. Set
$$\psi=\eta(x)(\nabla \bar{f})\circ u(x).$$
Then $\psi\in W^{1,p}_0(M,\R^k)\cap L^\infty(M,\R^k)$ is an admissible test vector field.}

Substitute {$\psi$} in~\eqref{eq:for key lemma} and we obtain
{
\begin{equation}
\begin{aligned}
	0&=\int_{\Omega}\big\langle |\nabla u|^{p-2}\nabla u, \nabla \big(\eta(x)\nabla \bar{f} \big)\big\rangle\\
	 &=\int_{\Omega}|\nabla u(x)|^{p-2}\big\langle \nabla u(x), \eta(x)\nabla_{\frac{\partial}{\partial x^\alpha}}(\nabla f)\otimes \nabla x^\alpha +\nabla \eta\otimes \nabla f\big\rangle d\mu\\
	 &=\int_{\Omega}|\nabla u|^{p-2}\Big( \eta(x)\nabla (\nabla f)(\nabla u,\nabla u)+\big\langle \nabla u(x), \nabla \eta\otimes \nabla f\big\rangle \Big) d\mu\\
	 &=\int_{\Omega}|\nabla u|^{p-2}\Big( \eta(x)\nabla (\nabla f)(\nabla u,\nabla u)+\big\langle \nabla\eta(x), \nabla (f\circ u)\big\rangle \Big) d\mu.
\end{aligned}
\label{eq:for key lemma 2}
\end{equation}
} On the other hand,  since $N$ is simply connected and non-positively curved, we have
$$\nabla (\nabla f)(\nabla u,\nabla u)\geq 2\|\nabla u\|^2;$$
see e.g.~\cite[Lemma 5.8.2]{j11}.
Inserting this into~\eqref{eq:for key lemma 2} yields
$$\int_{\Omega}|\nabla u|^{p-2}\big(2\eta|\nabla u|^2+\nabla \eta\cdot \nabla d^2(u,Q)\big)d\mu\leq 0. $$
The proof is complete.
\end{proof}

\begin{remark}\label{rmk:on key lemma}
Note that the assumption   $N$ being {simply connected and} non-positively curved is crucial in the above arguments as it implies that
$$\nabla(\nabla f)(v,v)\geq 2\|v\|^2$$
for the squared distance function $f=d^2(x,Q)$ (with any given $Q\in N$).
\end{remark}

We next derive the monotonicity formula for stationary $p$-harmonic mappings $u\colon M\to N$. Fix an arbitrary point $a\in M$ and set $E(r)=\int_{B_r(a)}|\nabla u|^pd\mu$. Note that $E'(r)=\int_{\partial B_r(a)}|\nabla u|^pd\Sigma$ for almost every $r$. When $M=\R^n$, the monotonicity formula (see~\cite[Lemma 4.1]{hl87}) for (minimizing) $p$-harmonic mappings  implies that for almost every $r\in (0,r_0)$,
$$\frac{d}{dr}\Big(r^{p-n}\int_{B_r(a)}|\nabla u|^pdx \Big)=pr^{p-n}\int_{\partial B_r(a)}|\nabla u|^{p-2}\Big|\frac{\partial u}{\partial r}\Big|^2 d\Sigma,$$
or we may equivalently formulate as
$$E'(r)=\frac{n-p}{r}E(r)+p\int_{\partial B_r(a)}|\nabla u|^{p-2}\Big|\frac{\partial u}{\partial r}\Big|^2 d\Sigma.$$

\begin{lemma}[Monotonicity formula]\label{lemma:monotone equation}
If $u\colon \Omega\to N$ is stationary $p$-harmonic, then for each $a\in {\Omega}$, there exists a radius $r_0>0$ such that for almost every $r\in (0,r_0)$, we have
	$$E'(r)=(1+O(r))\Big(\frac{n-p+O(r)}{r}E(r)+p\int_{\partial B_r(a)}|\nabla u|^{p-2}\Big|\frac{\partial u}{\partial r}\Big|^2 d\Sigma\Big).$$
\end{lemma}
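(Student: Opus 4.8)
The plan is to obtain the monotonicity formula by exploiting stationarity, i.e.\ the vanishing of the derivative of the $p$-energy under inner variations (domain deformations), and to pass from the Euclidean computation of Hardt--Lin to the Riemannian one by working in geodesic normal coordinates centered at $a$. First I would fix $a\in M$ and choose $r_0>0$ smaller than the injectivity radius at $a$, so that geodesic normal coordinates $x=(x^1,\dots,x^n)$ are available on $B_{r_0}(a)$. In these coordinates the metric satisfies $g_{\alpha\beta}(x)=\delta_{\alpha\beta}+O(|x|^2)$, $\sqrt{\det g}=1+O(|x|^2)$, and $g^{\alpha\beta}(x)=\delta_{\alpha\beta}+O(|x|^2)$; hence all the geometric quantities entering the computation differ from their Euclidean counterparts by factors of the form $1+O(r)$ (indeed $1+O(r^2)$, but $1+O(r)$ suffices and is what the statement asserts). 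This is the mechanism that produces the $(1+O(r))$ and $n-p+O(r)$ factors in the conclusion.

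Next I would carry out the inner-variation computation. For $0<r<r_0$ and a fixed radial cutoff, consider the family of diffeomorphisms $\phi_t$ of $B_{r_0}(a)$ generated by a vector field $\xi$ which in normal coordinates equals $\chi(|x|)\,x$ with $\chi$ a Lipschitz approximation of the indicator of $[0,r]$ (eventually one lets $\chi\to \mathbf 1_{[0,r]}$, which legitimizes the ``for almost every $r$'' in the statement, the exceptional set being the non-Lebesgue points of $E$). Stationarity gives
$$
\frac{d}{dt}\Big|_{t=0}\int_{\Omega}\big|\nabla\!\big(u\circ\phi_t\big)\big|^p\,d\mu=0 .
$$
Expanding the integrand using the chain rule and the change of variables $y=\phi_t(x)$, differentiating in $t$ at $t=0$, and collecting terms, one gets an identity of the schematic form
$$
\int \Big(\operatorname{div}\xi\Big)|\nabla u|^p\,d\mu
-p\int |\nabla u|^{p-2}\,\big\langle \nabla u\cdot D\xi,\nabla u\big\rangle\,d\mu
+(\text{terms from }\partial_\alpha g^{\beta\gamma}\text{ and }\partial_\alpha\sqrt{\det g})=0 ,
$$
where the last group of terms is $O(r)$ relative to the first two because of the normal-coordinate expansion above. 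Plugging in $\xi=\chi(|x|)x$ and using $\operatorname{div}\xi = n\chi + |x|\chi'$ together with the fact that $D\xi$ applied to the radial direction reproduces the radial derivative, the $\chi$-terms assemble into $nE(r)$ and $pE(r)$-type contributions while the $\chi'$-terms, supported near $|x|=r$, assemble (after integrating the delta-like $\chi'$) into $rE'(r)$ on the one hand and $pr\int_{\partial B_r(a)}|\nabla u|^{p-2}|\partial u/\partial r|^2\,d\Sigma$ on the other. Solving the resulting identity for $E'(r)$ and absorbing all the coordinate-error terms into $1+O(r)$ and $n-p+O(r)$ yields exactly the claimed formula, and the Euclidean identity recalled just before the lemma is the special case where all the $O(r)$'s vanish.

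The main obstacle, and the step deserving the most care, is the bookkeeping of the error terms: one must verify that every deviation of the Riemannian computation from the flat one — the non-constancy of $g^{\alpha\beta}$, of $\sqrt{\det g}$, the Christoffel symbols hidden in ``$\nabla$'', and the fact that $\phi_t$ is a diffeomorphism of a curved ball rather than a Euclidean dilation — contributes a quantity bounded by $C\,r$ times the relevant integral over $B_r(a)$ or $\partial B_r(a)$, with $C$ depending only on a $C^1$ (or $C^2$) bound for $g$ near $a$. A secondary technical point is the justification that $u\circ\phi_t\in W^{1,p}(\Omega,N)$ is an admissible competitor in the definition of stationarity and that one may differentiate under the integral sign; this is standard since $\phi_t$ is a smooth family of diffeomorphisms fixing $\partial B_{r_0}(a)$, so $u\circ\phi_t$ agrees with $u$ near that boundary and lies in the right Sobolev class with the same image. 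Once these estimates are in hand, the algebra is routine and the statement follows.
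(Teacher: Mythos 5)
Your proposal is correct and follows essentially the same route as the paper: an inner variation along the radial vector field multiplied by a cutoff approximating $\chi_{B_r(a)}$ (the paper writes this as the diffeomorphism $F_t(x)=(1+t\eta(x))x$ in normal coordinates), with the deviation of $g^{\alpha\beta}$ and $\sqrt{\det g}$ from their Euclidean values absorbed into the $O(r)$ error terms. The only cosmetic difference is that you invoke the sharper normal-coordinate expansion $g_{\alpha\beta}=\delta_{\alpha\beta}+O(|x|^2)$, whereas the paper only uses boundedness of the first derivatives of the metric coefficients to get $|A|\leq crE(r)$ and $g^{ik}\leq\delta^{ik}+cr$; both suffice for the stated $1+O(r)$ factors.
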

\begin{proof}
	The proof is similar to the case $p=2$ from~\cite[Section 2, Page 192-193]{gs92}; see also~\cite[Lemma 3.1 and Lemma 3.2]{dm10}.
	
	Let $\eta$ be a smooth function with support in a small neighborhood of $a$. For $t$ small consider the diffeomorphism of $\Omega$ given in a normal coordinates by $F_t(x)=(1+t\eta(x))x$ in a neighborhood of 0 with $F_t=\text{id}$ outside this neighborhood. Consider the comparison mappings $u_t=u\circ F_t$. Then $u_t$ has the same trace and regularity as $u$. Since $u$ is stationary $p$-harmonic, $\frac{d}{dt}|_{t=0}E(u_t)=0$. Direct computation (see~\cite[Section 2, Page 192]{gs92}) gives
	$$0=\int_{\Omega}|\nabla u|^{p-2}\Big(|\nabla u|^2(p-n)\eta-|\nabla u|^2\sum_ix_i\frac{\partial \eta}{\partial x_i}+p\sum_{i,j,k}g^{ik}\frac{\partial \eta}{\partial x_i}x_j\frac{\partial u}{\partial x_j}\cdot\frac{\partial u}{\partial x_k}d\mu \Big)+A,$$
	where $A$ is the reminder term given by
	$$\int_{\Omega}|\nabla u|^{p-2}\Big(\eta\sum_{i,j,k}\frac{\partial g^{ij}}{\partial x_k}x_k\frac{\partial u}{\partial x_i}\cdot \frac{\partial u}{\partial x_j}\sqrt{g}+|\nabla u|^2\eta\sum_ix_i\frac{\partial \sqrt{g}}{\partial x_i} \Big)dx.$$
	Choosing $\eta$ to approximate the characteristic function of $B_r(a)$, we obtain
	$$0=rE'(r)-(n-p+O(r))E(r)-pr\int_{\partial B_r(a)}|\nabla u|^{p-2} \sum_{i,j,k}g^{ik}\frac{\partial \eta}{\partial x_i}x_j\frac{\partial u}{\partial x_j}\cdot\frac{\partial u}{\partial x_k}d\Sigma,$$
	where we have used the fact that the reminder term $|A|\leq crE(r)$ (because $|\frac{\partial g^{ij}}{\partial x_k}|,|\frac{\partial \sqrt{g}}{\partial x_i}|$ are bounded from above by some constant $c$). Since $g^{ik}\leq \delta^{ik}+cr$ when $r$ is sufficiently small, we get
	$$\sum_{i,j,k}g^{ik}\frac{\partial \eta}{\partial x_i}x_j\frac{\partial u}{\partial x_j}\cdot\frac{\partial u}{\partial x_k}\leq \Big|\frac{\partial u}{\partial r}\Big|^2+cr|\nabla u|^2,$$
	from which the claim follows.
\end{proof}


We would like to point out that the non-positive curvature assumption for $N$ was only used in Lemma~\ref{lemma:equation}, while, the conclusion of Lemma~\ref{lemma:monotone equation} remains valid for general Riemannian manifold $N$ (without any curvature restriction). With the aid of Lemma~\ref{lemma:equation} and Lemma~\ref{lemma:monotone equation}, we are able to derive the following important monotonicity inequality.

\begin{lemma}\label{lemma:key monotone inequality}
	There exist $r_1>0$ and $\gamma>0$ depending on $B_{r_0}(a)$, the Lipschitz bound and the ellipticity constant of $g$ such that
	$$r\mapsto \frac{E(r)}{r^{n-p+p\gamma}},\quad r\in (0,r_1)$$
	is non-decreasing.
\end{lemma}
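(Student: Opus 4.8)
The plan is to combine the two preceding lemmas into a differential inequality for $E(r)$ and then integrate. First I would apply Lemma~\ref{lemma:equation} with a test function $\eta$ that approximates the characteristic function of $B_r(a)$; since $\nabla\eta$ concentrates on $\partial B_r(a)$ pointing in the $-\partial/\partial r$ direction, and $\nabla f_Q\cdot\partial/\partial r$ is controlled by $|\nabla f_Q|$, the inequality
$$\int_{\Omega}|\nabla u|^{p-2}\bigl(2\eta|\nabla u|^2+\nabla\eta\cdot\nabla d^2(u,Q)\bigr)\,d\mu\leq 0$$
passes in the limit to something of the form
$$2\int_{B_r(a)}|\nabla u|^p\,d\mu\leq (1+O(r))\int_{\partial B_r(a)}|\nabla u|^{p-2}\,\Bigl|\frac{\partial}{\partial r}d^2(u,Q)\Bigr|\,d\Sigma,$$
where the $O(r)$ absorbs the deviation of the Riemannian gradient/metric from the Euclidean one on the small ball $B_{r_0}(a)$. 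Here is where the non-positive curvature is essential, through Lemma~\ref{lemma:equation}; without it there is no such favorable sign.

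Next I would choose $Q=u(a)$ (or, to be safe about where the distance function is smooth, a point at controlled distance) so that on $\partial B_r(a)$ one has $d(u(x),Q)\leq C\,\osc_{\partial B_r(a)}u$. The key pointwise bound is $|\partial_r d^2(u,Q)|=2d(u,Q)\,|\partial_r d(u,Q)|\leq 2d(u,Q)\,|\partial u/\partial r|$ by the chain rule and $|\nabla d|\leq 1$. Then a Poincaré-type inequality on the sphere $\partial B_r(a)$ gives, for a.e. $r$,
$$\sup_{x,y\in\partial B_r(a)}d(u(x),u(y))\lesssim r^{1-(n-1)/p'}\Bigl(\int_{\partial B_r(a)}|\nabla_{T}u|^{p}\,d\Sigma\Bigr)^{1/p}\lesssim r\,\Bigl(r^{p-n}E'(r)\Bigr)^{1/p},$$
using the tangential part of $\nabla u$ on the sphere. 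Inserting these two estimates and applying Hölder's inequality with exponents $p/(p-2)$... — more directly, writing $|\nabla u|^{p-2}|\partial_r d^2| \le 2 d(u,Q)|\nabla u|^{p-2}|\partial_r u|$ and using Hölder on $\partial B_r(a)$ with the split $|\nabla u|^{p-2}|\partial_r u| = |\nabla u|^{(p-2)/2}\cdot\bigl(|\nabla u|^{(p-2)/2}|\partial_r u|\bigr)$ — one arrives at
$$E(r)\lesssim r\,\bigl(\osc_{\partial B_r(a)}u\bigr)\,E'(r)^{1/2}\,\Bigl(\int_{\partial B_r(a)}|\nabla u|^{p-2}|\partial_r u|^2\,d\Sigma\Bigr)^{1/2}.$$
Combining with the oscillation bound above yields a closed inequality purely in terms of $E(r)$ and $E'(r)$, plus the radial-derivative integral; and the latter is exactly what the monotonicity formula of Lemma~\ref{lemma:monotone equation} estimates: $p\int_{\partial B_r(a)}|\nabla u|^{p-2}|\partial_r u|^2\,d\Sigma \le (1+O(r))^{-1}E'(r)-\tfrac{n-p+O(r)}{r}E(r)\le E'(r)$.

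Putting everything together I expect to obtain, for a.e. $r\in(0,r_1)$, an inequality of the shape
$$E(r)\leq C r\,E'(r),$$
i.e. $E(r)^{}\leq C r E'(r)$, possibly first in the form $E(r)^2\lesssim r^2 E'(r)\cdot E'(r)$ after eliminating the oscillation and radial terms. This rearranges to $\frac{d}{dr}\log E(r)\geq \frac{1}{Cr}$, hence $\frac{d}{dr}\bigl(E(r)/r^{1/C}\bigr)\geq 0$; setting $n-p+p\gamma=1/C$, equivalently $\gamma=\tfrac{1}{p}(1/C-(n-p))$, and noting we may always shrink $C$ (worsen the constant) so that $1/C>n-p$ and $\gamma>0$, gives that $r\mapsto E(r)/r^{n-p+p\gamma}$ is non-decreasing on $(0,r_1)$. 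The constant $C$ and hence $\gamma$ and $r_1$ depend only on $B_{r_0}(a)$, the Lipschitz bound on $g$ and its ellipticity constant, since these are all that enter the $O(r)$ terms and the Euclidean-to-Riemannian comparisons.

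The main obstacle I anticipate is the careful bookkeeping in the limiting procedure $\eta\to\chi_{B_r(a)}$ in Lemma~\ref{lemma:equation}, together with tracking all the $O(r)$ error terms coming from the metric $g$ being merely close to Euclidean on $B_{r_0}(a)$; one must ensure these errors are uniformly controlled by the Lipschitz/ellipticity data and can be absorbed into the $(1+O(r))$ factors without destroying the sign. A secondary technical point is the Poincaré inequality on the geodesic sphere $\partial B_r(a)$ with constants uniform in $r$: this is standard after rescaling, but requires the $C^1$-closeness of $g$ to the flat metric to transfer the Euclidean sphere Poincaré inequality. Everything else — the chain rule bound $|\partial_r d^2(u,Q)|\le 2 d(u,Q)|\partial_r u|$, the Hölder splitting, and the final ODE integration — is routine.
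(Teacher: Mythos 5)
Your overall strategy (test Lemma~\ref{lemma:equation} with $\eta\approx\chi_{B_r(a)}$, estimate the resulting boundary integral by H\"older plus a Poincar\'e-type inequality, and close the loop with Lemma~\ref{lemma:monotone equation}) is the paper's strategy, but the final step contains a genuine gap that destroys the conclusion. You bound the radial term $A:=\int_{\partial B_r(a)}|\nabla u|^{p-2}|\partial_r u|^2\,d\Sigma$ from above by $E'(r)$ and end with $E(r)\le C\,r\,E'(r)$ for some uncontrolled constant $C$, from which you get $E(r)/r^{1/C}$ non-decreasing; you then claim one may ``shrink $C$'' so that $1/C>n-p$. This is backwards: one may always \emph{enlarge} $C$ (weakening the inequality), never shrink it, so there is no reason whatsoever that $1/C>n-p$, and the lemma's whole content is precisely the improvement of the exponent beyond $n-p$ (the bound $rE'\ge (n-p+O(r))E$ is already free from Lemma~\ref{lemma:monotone equation} since $A\ge0$). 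The paper avoids this by \emph{keeping} $A$: the H\"older/Poincar\'e step yields $E(r)^p\lesssim r^{p/2}(rE'(r))^{p/2}A^{p/2}$, the monotonicity formula converts $(rE'(r))^{p/2}$ into $E(r)^{p/2}+(rA)^{p/2}$, and Young's inequality gives $E(r)\le K r A$. Feeding $rA\ge E(r)/K$ back into the identity $rE'(r)=(n-p+O(r))E(r)+(pr+O(r^2))A$ produces $rE'(r)\ge(n-p+p\gamma)E(r)$ with $\gamma\approx 1/K>0$; the gain $p\gamma$ comes exactly from the term $prA$ that you discarded.

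There are two secondary problems. First, your pointwise oscillation bound $\sup_{x,y\in\partial B_r(a)}d(u(x),u(y))\lesssim r^{1-(n-1)/p'}\|\nabla_T u\|_{L^p(\partial B_r)}$ is a Morrey--Sobolev embedding on the $(n-1)$-dimensional sphere and is false unless $p>n-1$; moreover the choice $Q=u(a)$ is circular, since $u$ is not yet known to be continuous at $a$. The paper instead uses the $L^p$ Poincar\'e inequality $\inf_{Q\in N}\int_{\partial B_r(a)}d^p(u,Q)\,d\Sigma\le Cr^p\int_{\partial B_r(a)}|\nabla u|^p\,d\Sigma$, valid for all $p\in(1,\infty)$, and fixes $Q$ realizing it. Second, your Cauchy--Schwarz split $|\nabla u|^{p-2}|\partial_r u|=|\nabla u|^{(p-2)/2}\cdot\bigl(|\nabla u|^{(p-2)/2}|\partial_r u|\bigr)$ produces the factor $\bigl(\int_{\partial B_r(a)}|\nabla u|^{p-2}\,d\Sigma\bigr)^{1/2}$, not $E'(r)^{1/2}$; the correct three-factor H\"older (as in the paper) sends $d(u,Q)$ to $L^p$, $|\nabla u|^{(p-2)/2}$ to $L^{2p/(p-2)}$ and $|\nabla u|^{(p-2)/2}|\partial_r u|$ to $L^2$, and this only works for $p\ge2$ --- the case $1<p<2$ needs the separate $\ep$-interpolation argument carried out in the paper, which your proposal does not address.
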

\begin{proof}
Set $I_Q(r)=\int_{\partial B_{r}(a)}d^p(u,Q)d\Sigma$, where $r>0$ is small. Recall that  Poincar\'e's inequality {(see e.g. \cite[Lemma 2.1]{gx19})} for $B_{r_0}(a)$ implies that	
$$\inf_{Q\in N}I_Q(r)\leq Cr^p\int_{\partial B_r(a)}|\nabla u|^pd\Sigma,$$	
where the constant $C$ depends only on $B(a,r_0)$ and the ellipticity constant of $g$. We will fix $Q\in N$ such that the above Poincar\'e inequality holds for $u$.
	
We first consider the case $p\geq 2$. Choosing $\eta$ to approximate $\chi_{B_r(a)}$ in Lemma~\ref{lemma:equation} and then applying the H\"older's inequality and Poincar\'e inequality, we infer that
\begin{align*}
	E(r)^p&\lesssim \Big(\int_{\partial B_r(a)}|\nabla u|^{p-2}d(u,Q)\frac{\partial}{\partial  r}d(u,Q)d \Sigma\Big)^p\\
	&\leq I_Q(r)\Big(\int_{\partial B_r(a)}|\nabla u|^p d\Sigma  \Big)^{(p-2)/2}\Big(\int_{\partial B_r(a)}|\nabla u|^{p-2}\Big|\frac{\partial u}{\partial r}\Big|^2 d\Sigma \Big)^{p/2}\\
	&\lesssim r^{p/2}(rE'(r))^{p/2}\Big(\int_{\partial B_r(a)}|\nabla u|^{p-2}\Big|\frac{\partial u}{\partial r}\Big|^2 d\Sigma \Big)^{p/2}.
\end{align*}
 Set $A=\int_{\partial B_r(a)}|\nabla u|^{p-2}\Big|\frac{\partial u}{\partial r}\Big|^2 d\Sigma$. Lemma~\ref{lemma:monotone equation} and the above inequality imply that
  \begin{align*}
	E(r)^p&\lesssim
	(1+O(r))\Big((n-p+O(r))E(r)+prA\Big)^{p/2}(rA)^{p/2}\\
	&\lesssim \left((E(r))^{p/2}+(rA)^{p/2}\right)(rA)^{p/2}.
\end{align*}
  Note that the constant in the above estimate depends only on the constant from the Poincar\'e inequality and the ellipticity constant of $g$. Applying the Young's inequality $ab\le \epsilon a^2+C_{\epsilon}b^2$ (with $\varepsilon$ sufficiently small), we obtain from the previous inequality that
 $$E(r)\leq KrA$$ for some constant $K>0$ independent of $r$.

Now using Lemma~\ref{lemma:monotone equation} again, we have
\begin{align*}
rE'(r)&=(n-p+O(r))E(r)+(pr+O(r^2))A\\
&\geq (n-p+O(r))E(r)+\frac{p+O(r)}{K}E(r)\\
&=(n-p+\frac{p}{K}+O(r))E(r)\\
&\geq (n-p+p\gamma)E(r)
\end{align*} for some $\gamma>0$ when $r$ is sufficiently small.
This implies
$$\frac{d}{dr}\Big(\log\frac{E(r)}{r^{n-p+p\gamma}} \Big)\geq 0$$
and so the claim follows in this case.


Next we consider the case $1<p<2$. Similarly as in the previous case, we have
\[
\begin{aligned}E(r)^{p} & \lesssim\left(\int_{\pa B_{r}(a)}|\na u|^{p-2}d(u,Q)\left|\frac{\pa}{\pa r}d(u,Q)\right|d \Sigma\right)^{p}\\
 & \lesssim\left(\int_{\pa B_{r}(a)}d^{p}(u,Q)d \Sigma\right)\left(\int_{\pa B_{r}(a)}\left(|\na u|^{p-1-\ep}\left|\frac{\pa}{\pa r}d(u,Q)\right|^{\ep}\right)^{\frac{p}{p-1}}d \Sigma\right)^{p-1}\\
 & \lesssim I_{Q}(r)\left(\int_{\pa B_{r}(a)}|\na u|^{p-2}|\na u|^{2-p'\ep}\left|\frac{\pa}{\pa r}d(u,Q)\right|^{p'\ep}d \Sigma\right)^{p-1},
\end{aligned}
\]
{ where we used the estimate $\left|{\pa}_{r}d(u,Q)\right|\le C|\na u|$ (so that $\left|{\pa}_{r}d(u,Q)\right|/|\na u|\le C$) in the second line.}
Here, $p'=p/(p-1)$, $\ep>0$ is chosen such that $p-1-\ep>0$ and
$2-p'\ep>0$.  Applying H\"older's inequality and Poincar\'e's inequality,
we deduce
\[
\begin{aligned}E(r)^{p} & \lesssim I_{Q}(r)\left(\int_{\pa B_{r}(a)}|\na u|^{p}d \Sigma\right)^{p-1-\frac{p\ep}{2}}\left(\int_{\pa B_{r}(a)}|\na u|^{p-2}\left|\frac{\pa}{\pa r}d(u,Q)\right|^{2}d \Sigma\right)^{\frac{p\ep}{2}}\\
 & \lesssim r^{p}\left(E^{\prime}(r)\right)^{p-\frac{p\ep}{2}}A^{\frac{p\ep}{2}},
\end{aligned}
\]
which, according to Lemma~\ref{lemma:monotone equation}  and Young's inequality, implies that
\[
\begin{aligned}E(r) & \lesssim(rA)^{\ep/2}(rE^{\prime}(r))^{1-\ep/2}\\
 & \lesssim(rA)^{\ep/2}(E(r)+rA)^{1-\ep/2}\le\frac{1}{2}E(r)+KrA\\
\end{aligned}
\]
for some $K>0$ independent of $r$. The rest arguments are the same
as in the previous case. This completes the proof.
\end{proof}

\begin{proof}[Proof of Theorem~\ref{thm:main theorem}]
	By Lemma~\ref{lemma:key monotone inequality}, $u$ is locally H\"older continuous { by Morrey's Dirichlet growth theorem, see e.g. \cite[Chapter 3]{Giaquinta-Book}}. The local $C^{1,\alpha}$-regularity follows by the standard regularity theory of elliptic PDEs; see e.g.~\cite[Section 3]{hl87}. \end{proof}

\section{Proof of Theorem~\ref{thm:main theorem 2}}\label{sec:proof main thm 2}
\subsection{The $p$-minimizing tangent maps}

Following~\cite{su82} (for the case $p=2$) and~\cite{hl87}, we introduce the definition of minimizing tangent maps.

\begin{definition}\label{def:p-mim tangent map}
	 A mapping $v\in W^{1,p}_{loc}(\R^{l},N)$ is said to be a $p$-minimizing tangent map if $v\colon \R^l\to N$ is locally { minimizing} $p$-harmonic and is homogeneous of degree 0, that is, the radial derivative $\frac{\partial v}{\partial r}=0$ almost everywhere.
\end{definition}

Fix a $p$-harmonic mapping $u\colon \Omega\to N$ and an integer $l\in \{1,2,\cdots,n\}$. We consider the blow-up mappings $u_{x,r}(y):=u(x+ry)\colon \mathbb{B}\to N$, where $\mathbb{B}\subset \R^l$ is the unit open ball. By~\cite[Corollary 4.4]{hl87}, there exists a sequence $r_i\to 0$ such that $u_{x,r_i}$ converges strongly in $W^{1,p}(\mathbb{B},N)$ to a mapping $u_0\in W^{1,p}(\mathbb{B},N)$ which is homogeneous of degree 0. By homogeneity, we may then extend $u_0$ to all of $\R^l$ (and we still denote by $u_0$ the extended mapping) so that $u_0\colon \R^l\to N$ is a $p$-minimizing tangent map. We call such $u_0$ a $p$-minimizing tangent map of $u$.

Note that if $u(M)\subset B_R(P)$, then $u_{x,r_i}(\mathbb{B})\subset B_R(P)$ for each $i\in \mathbb{N}$. The strong convergence of $u_{x,r_i}$ to $u_0$ then implies that $u_0(\mathbb{B})\subset B_R(P)$. As $u_0$ is homogeneous of degree 0, $u_0(\R^l)\subset B_R(P)$ as well. Consequently, Theorem~\ref{thm:main theorem 2} follows immediately from~\cite[Theorem 4.5]{hl87} and the following Liouville's theorem for $p$-harmonic mappings from Euclidean space $\R^l$ into regular geodesic balls.
\begin{theorem}\label{thm:Liouville}
	There is no non-constant { minimizing} $p$-harmonic mapping $u\colon \mathbb{R}^l\to B_R(P)\subset N$ for each $l=1,2,\cdots$.
\end{theorem}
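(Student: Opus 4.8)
The plan is to prove Theorem~\ref{thm:Liouville} by a Caccioppoli-type argument combined with the monotonicity formula for minimizing $p$-harmonic maps on $\mathbb{R}^l$. The geometric input is that $B_R(P)$ is a \emph{regular} geodesic ball: since $R<\pi/(2\sqrt{\kappa})$ and $B_R(P)$ misses the cut locus of $P$, the comparison results of Hildebrandt--Kaul--Widman~\cite{hkw77} provide a function $\phi$ on $B_R(P)$ (essentially $\phi = 1-\cos(\sqrt{\kappa}\, d(\cdot,P))$ up to normalization, or $d^2(\cdot,P)$ when $\kappa=0$) which is strictly convex along geodesics and whose Hessian satisfies a two-sided bound $c_1\,|v|^2 \le \mathrm{Hess}\,\phi(v,v) \le c_2\,|v|^2$ for vectors $v$ tangent to $N$ at points of $B_R(P)$, with $c_1>0$ thanks to the strict inequality $R<\pi/(2\sqrt{\kappa})$. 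First I would record this convexity estimate and the resulting differential inequality for $\phi\circ u$: testing the weak $p$-harmonic equation with $\psi=\eta^2\,(\nabla\phi)\circ u$ (as in Lemma~\ref{lemma:equation}, but now using the lower Hessian bound rather than the non-positive-curvature bound) yields
\[
\int_{\mathbb{R}^l} |\nabla u|^{p-2}\,\eta^2\,|\nabla u|^2\,d\mu \;\lesssim\; \int_{\mathbb{R}^l} |\nabla u|^{p-2}\,\eta\,|\nabla\eta|\,|\nabla(\phi\circ u)|\,d\mu,
\]
and since $|\nabla(\phi\circ u)| \lesssim |\nabla u|$ (as $\phi$ is Lipschitz on the relatively compact ball $B_R(P)$), the right-hand side is controlled by $\int |\nabla u|^{p-1}\eta|\nabla\eta|\,d\mu$.

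Next I would turn this into a Caccioppoli inequality. Using $|\nabla u|^{p-1}\eta|\nabla\eta| = \big(|\nabla u|^{p-2}\eta^2 |\nabla u|^2\big)^{(p-1)/p}\cdot\big(|\nabla u|^{p-2}|\nabla\eta|^p \big)^{1/p}$ and Young's inequality to absorb the first factor into the left-hand side, one gets
\[
\int_{\mathbb{R}^l} |\nabla u|^{p}\,\eta^2\,d\mu \;\le\; C \int_{\mathbb{R}^l} |\nabla u|^{p-2}\,|\nabla\eta|^p\,d\mu.
\]
When $p\ge 2$ this is already a clean Caccioppoli inequality once one bounds $|\nabla u|^{p-2}$ on the support of $\nabla\eta$ — but since $u$ is on all of $\mathbb{R}^l$ one cannot assume $|\nabla u|$ is bounded a priori, so instead one keeps the weight and handles it via the monotonicity formula. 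The case $1<p<2$ requires the extra care the authors advertise ("delicate estimates of~\cite{hkw77}"): here $|\nabla u|^{p-2}$ is the problematic weight since it blows up where $\nabla u$ is small, so I would insert $\varepsilon$-regularizations $(|\nabla u|^2+\delta)^{(p-2)/2}$ into the test function, derive the inequality with the regularized weight, and pass to the limit $\delta\to 0$ using the $W^{2,2}$-type estimates the paper establishes in its appendix (this is exactly why the appendix on $1<p<2$ is needed).

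The final step is to combine the Caccioppoli inequality with the monotonicity of $r\mapsto r^{p-l}\int_{B_r}|\nabla u|^p\,d\mu$ from the Hardt--Lin monotonicity formula~\cite[Lemma 4.1]{hl87} to force $\nabla u\equiv 0$. Choosing the standard logarithmic cutoff $\eta$ that equals $1$ on $B_R$, $0$ outside $B_{R^2}$ (or the linear cutoff $\eta=1$ on $B_R$, $\eta = 2 - r/R$ on $B_{2R}\setminus B_R$) and feeding it into the Caccioppoli inequality, the right-hand side is $\lesssim R^{-p}\int_{B_{2R}\setminus B_R}|\nabla u|^{p-2}\cdot|\nabla u|^{?}$ — more precisely, after re-applying Hölder to restore a pure $|\nabla u|^p$ integral at the cost of a factor $(\mathrm{vol})^{2/l}$, one obtains $\int_{B_R}|\nabla u|^p \lesssim R^{-p+\,l\cdot\frac{2}{l}}\big(\int_{B_{2R}}|\nabla u|^p\big)^{?}$; dimensional bookkeeping together with the monotonicity bound $\int_{B_r}|\nabla u|^p \le C r^{l-p}$ then shows the energy on $B_R$ tends to $0$ as the outer radius $\to\infty$, hence $u$ is constant. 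I expect the main obstacle to be precisely the bookkeeping in the $1<p<2$ case — keeping track of the degenerate weight $|\nabla u|^{p-2}$ through Hölder, Young, the regularization, and the scaling argument so that all exponents line up and the absorbing constant stays $<1$; the $p\ge 2$ case is essentially Fuchs'~\cite{f88} argument and should go through with only cosmetic changes.
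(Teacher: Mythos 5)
There is a genuine gap, and it sits exactly where you flag the ``main obstacle.'' First, a structural problem with your Caccioppoli step: your Young's inequality splitting of $|\nabla u|^{p-1}\eta|\nabla\eta|$ is not an identity for $p\neq 2$ (check the exponents of $|\nabla u|$ and $\eta$ on the two factors), and it leaves the degenerate weight $|\nabla u|^{p-2}$ on the right-hand side, which then forces you into the regularization/$W^{2,2}$ machinery. That machinery is not what the paper uses here (the appendix serves the gradient estimate of Section 4, not the Liouville theorem), and it is unnecessary: the paper tests the Euler system \eqref{eq:Euler-Lang better version} with $\Psi=\eta^p v$, where $v$ is the coordinate representation of $u$ in normal coordinates centered at the \emph{mean value} $\bar P$, so that $|v|\le 2R$. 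The Hildebrandt--Kaul--Widman bound $\theta(v,\nabla v)\ge a_\kappa(2R)|\nabla v|^2$ plays the role of your Hessian lower bound, and the error term is $|\nabla v|^{p-2}\nabla v\cdot v\,\nabla(\eta^p)\sim |\nabla v|^{p-1}\eta^{p-1}\,|v|\,|\nabla\eta|$; Young's inequality with exponents $\bigl(\tfrac{p}{p-1},p\bigr)$ absorbs the first factor and leaves $|v|^p|\nabla\eta|^p$ --- no leftover weight, uniformly for all $1<p<\infty$. This yields the clean inequality \eqref{eq:key caccioppoli}, $\int_{B_r}|\nabla V|^p\le c\,r^{-p}\int_{B_{2r}}|V-\bar V|^p$, with no case distinction between $p\ge 2$ and $p<2$.

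Second, and more seriously, your concluding ``dimensional bookkeeping'' does not close the argument in the main case $p\le l$. The Caccioppoli inequality only gives $\int_{B_t}|\nabla u|^p\lesssim R^p\,t^{l-p}$, which is consistent with the monotonicity bound but does \emph{not} tend to zero for $l\ge p$; no rearrangement of H\"older exponents will make it do so. The paper's proof for $p\le l$ requires a genuine blow-down: set $u_k(x)=u(kx)$, use the Caccioppoli inequality for uniform $W^{1,p}_{\rm loc}$ bounds, extract a \emph{strongly} convergent subsequence (via the compactness of $p$-harmonic maps due to Fuchs~\cite{f88} or Luckhaus~\cite{Luckhaus88}), show the limit $v_0$ is homogeneous of degree zero using the monotonicity formula, and then prove $v_0$ is constant by testing its Euler equation with $\Phi=\eta(|x|)v_0$ --- homogeneity kills the radial term and the $a_\kappa(2R)$-convexity forces $\nabla v_0=0$. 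Only then does monotonicity, $t^{p-l}\int_{B_t}|\nabla u|^p\le (kt)^{p-l}\int_{B_{kt}}|\nabla u|^p\to 0$, give $\nabla u\equiv 0$. This entire mechanism --- strong compactness, homogeneity of the tangent map at infinity, and the convexity argument for homogeneous maps --- is absent from your proposal, so as written the proof only covers the (easy) case $p>l$.
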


As commented earlier in the introduction, the case $p\geq 2$ has been proved by Fuchs~\cite{f88} and later the proof was extended to stationary $p$-harmonic mappings ($p\geq 2$) in~\cite{f89}, where the image is required to be contained in a smaller geodesic ball. We will give the proof of Theorem~\ref{thm:Liouville} in the next section, where we essentially extend the original arguments of Fuchs~\cite{f88} in combination with some arguments from~\cite{hkw77} to the case $p\in (1,2)$.

\subsection{Proof of Theorem~\ref{thm:Liouville}}

Fix a $p$-harmonic mapping $u\colon \R^l\to B_R(P)\subset N$. Let $h$ be the Riemannian metric on $N$. Before turning to the proof of Theorem~\ref{thm:Liouville}, we recall some elementary facts about $p$-harmonic mappings. In the following calculation, we will use the standard Einstein summation convention.

Let $v$ denote the representative of $u$ with respect to the normal coordinates centered in $B_R(P)$ and recall that
$$|\nabla v|=\Big(\delta^{\alpha\beta}h_{ij}(v)\frac{\partial v^i}{\partial x_\alpha}\frac{\partial v^j}{\partial x_\beta}\Big)^{{1}/{2}}.$$
Fix a ball $B\subset \R^l$. The Euler system for $v$ reads as
\begin{equation}\label{eq:Euler-Lang}
\int_{B}|\nabla v|^{p-2}\Big(h_{ij}(v)\frac{\partial v^i}{\partial x_\alpha}\frac{\partial \Phi^j}{\partial x_\beta}+\frac{1}{2}\big(\frac{\partial h_{ij}}{\partial x_k}\circ v\big)\frac{\partial v^i}{\partial x_\alpha}\frac{\partial v^j}{\partial x_\beta}\Phi^k \Big)\delta^{\alpha\beta} dx=0
\end{equation}for all bounded $\Phi\in W^{1,p}_0(B,\R^{{\rm dim} N})$. If we take $\Phi^k=h^{ki}(v)\Psi^i$, then
$$\frac{\partial \Phi^k}{\partial x_\beta}=h^{ki}(v)\frac{\partial \Psi^i}{\partial x_\beta}+\frac{\partial h^{ki}}{\partial v_m}\frac{\partial v^m}{\partial x_\beta}\Psi^i.$$
Plugging this into~\eqref{eq:Euler-Lang}, we finally arrive at
\begin{equation}\label{eq:Euler-Lang better version}
\int_{B}|\nabla v|^{p-2}\Big(\frac{\partial v^i}{\partial x_\alpha}\frac{\partial \Psi^j}{\partial x_\beta}\delta_{ij}-\Gamma_{ij}^l(v)\frac{\partial v^i}{\partial x_\alpha}\frac{\partial v^j}{\partial x_\beta}\Psi^l \Big)\delta^{\alpha\beta} dx=0,
\end{equation}
where $\Gamma_{ij}^l$ denotes the Christoffel symbols on the manifold $N$.

For each $x\in B$ with $r<d(x,\partial B)/2$, we define
\begin{equation*}
	\bar{V}:=\dashint_{B_{2r}(x)}Vdz\quad \text{and}\quad \bar{P}:=\big(\exp_P\big)^{-1}(\bar{V}),
\end{equation*}
where $V$ is the representation of $u$ with respect to the normal coordinates centered at $P$. { Note that  $\bar{P}$ is well-defined since $V$ is the representation of $u$ with respect to normal coordinates centered at $P$, $V$ takes its values into an Euclidean ball and we can see the exponential map as a map taking its value into this ball.}
Since $\bar{P}\in B_R(P)$, we may introduce another normal coordinates with center $\bar{P}$ and we denote by $v$ the representation of $u$ with respect to this normal coordinates.

Let $\eta\in C_0^\infty(B_{2r}(x))$ be a cut-off function which satisfies $\eta=1$ on $B_r(x)$, $0\leq \eta\leq 1$ in $B_{2r}(x)$ and $|\nabla \eta|\leq cr^{-1}$ for some constant $c=c(n)$. Set
$$\theta(v,\nabla v):=|\nabla v|^2-\Gamma_{ij}^l(v)\nabla v^i\cdot \nabla v^j v^l.$$
Note that
$$|v|=d(u,\bar{P})\leq d(u,P)+d(P,\bar{P})\leq 2R<\frac{\pi}{\sqrt{\kappa}}$$
and so by~\cite[Estimate (4.7)]{hkw77}
\begin{equation}\label{eq:lower bound for theta}
\theta(v,\nabla v)\geq a_{\kappa}(2R)h_{ij}(v)\nabla v^i\cdot \nabla v^j= a_{\kappa}(2R)|\nabla v|^2,
\end{equation}
where $a_{\kappa}>0$ is defined as in~\cite[Section 2]{hkw77}. Inserting~\eqref{eq:lower bound for theta} into~\eqref{eq:Euler-Lang better version} and taking $\Psi=\eta^pv$, we arrive at
\begin{equation}\label{eq:RHI 1}
\int_B \eta^pa_{\kappa}(2R)|\nabla v|^{p}dx\leq \sum_{\alpha,i}\int_B |\nabla v|^{p-2}\frac{\partial v^i}{\partial x_\alpha}v^i\frac{\partial \eta^p}{\partial x_\alpha}dx.
\end{equation}
Note that by~\cite[Lemma 1]{hkw77}, we have
\begin{equation}\label{eq:bounds from HKW lemma}
b_{\kappa}^2(|y|)|\xi|^2\leq h_{ij}(y)\xi^i\xi^k\leq b^2_\omega(|y|)|\xi|^2
\end{equation}
for all $\xi\in \R^k$, where $b_{\kappa}$ and $b_\omega$ are defined  as in~\cite[Lemma 1]{hkw77}. Applying~\eqref{eq:bounds from HKW lemma} with $y=v(x)$ and $\xi^i=\na_{\alpha} v^i$ for each fixed $\al$, we deduce
\begin{equation}\label{eq:bounds on A}
c_2|\nabla v(x)|^2\leq h_{ij}(v(x))\nabla v^i(x)\cdot \nabla v^j(x)\leq c_3|\nabla v(x)|^2
\end{equation}
for almost every $x\in B$. Applying~\eqref{eq:RHI 1},~\eqref{eq:bounds from HKW lemma} and $\varepsilon$-Young's inequality, we obtain
\begin{align*}
	a_{\kappa}(2R)\int_B \eta^p |\nabla v|^{p}dx & \leq \sum_{\alpha,i}\int_{B} |\nabla v|^{p-2}D_\alpha v^i v^iD_\alpha(\eta^p)dx\\
	&\le \varepsilon \int_B|\nabla v|^{p}\eta^{p}dx+c'(\varepsilon)\sum_i\int_B|\nabla \eta|^p|v^i|^pdx.
\end{align*}
Absorbing the first term into the left-hand side of the previous inequality, we obtain
\begin{equation}\label{eq:reverse Sobolev}
\int_B\eta^p|\nabla v|^{p}dx\leq c_4\int_B|\nabla \eta|^p|v|^pdx.
\end{equation}
Note that
$$\int_{B_r(x)}|\nabla v|^{p}dz=E_p(u|_{B_r(x)})\geq c_5\int_{B_r(x)}|\nabla V|^pdz,$$
where we have used the fact that an inequality of the form~\eqref{eq:bounds from HKW lemma} remains valid in normal coordinates centered at $P$. Observe that (see \cite[Page 11, footnote (1)]{hkw77})
$$|v(x)|=d(u(x),\bar{P})\leq b_\omega(2R)|V(x)-\bar{V}|.$$
Combining all these estimates, we arrive at the following Caccioppoli inequality for the coordinate representative of $u$
\begin{equation}\label{eq:key caccioppoli}
\int_{B_r(x)}|\nabla V|^pdz\leq c_6r^{-p}\int_{B_{2r}(x)}|V-\bar{V}|^pdz.
\end{equation}

\begin{remark}\label{rmk:on RHI}
1). The Caccioppoli inequality~\eqref{eq:key caccioppoli} was first obtained by Fuchs~\cite[Page 412]{f88}, where he assumed $p\geq 2$ and refers to the book of Giaquinta~\cite{Giaquinta-Book}. The proofs we are using here make use of some delicate estimates from~\cite[Proof of Theorem 3]{hkw77} { and is very similar to the proof given in \cite{f89} (notice that a regular geodesic ball always lies within normal range of all of its points and so one can check that the smaller radius requirement for regular geodesic ball in [13] is not needed in deriving the Cacciopoli inequality.)}. In particular, the Caccioppoli inequality~\eqref{eq:key caccioppoli} holds for \emph{weakly} $p$-harmonic mappings.

2). As a consequence of the Caccioppoli inequality~\eqref{eq:key caccioppoli} and~\cite[Lemma 5]{dm04}, we infer that if a \emph{weakly} $p$-harmonic mapping $u\colon B_{2r}\to N$ satisfies $u(B_{2r})\subset B_R(P)$ for a regular geodesic ball in $N$ and $E_p(u)\leq \varepsilon$ for some $\varepsilon$ depending only on $n$, $p$ and $N$, then $u\in C^{1,\alpha}(B_r,N)$ for some $\alpha$ depending only on $n$, $p$ and $N$.

3). Since~\eqref{eq:key caccioppoli} holds for all balls $B_{2r}(x)\subset\subset B$, we may apply the standard reverse H\"older inequality (see  Giaquinta~{\cite[Chapter V, Proposition 1.1]{Giaquinta-Book}}) to deduce that there is  $q>p$ such that $\na V\in L^q_{\rm loc}$. Moreover,
\begin{equation}\label{eq:reverse holder}
\Big(\dashint_{B_r(x)}|\nabla V|^qdz \Big)^{1/q}\leq c_7\Big(\dashint_{B_{2r}(x)}|\nabla V|^pdz \Big)^{1/p}.
\end{equation}
\end{remark}

Now we can prove Theorem~\ref{thm:Liouville}.

\begin{proof}[Proof of Theorem~\ref{thm:Liouville}]
	For each $k\in \mathbb{N}$, we set $u_k(x):=u(kx)$ and let $v$ and $v_k$ be the coordinate representation of $u$ and $u_k$ with respect to the normal coordinates centered at $P$.

 We first consider the case $p\leq l$. By the Caccioppoli inequality~\eqref{eq:key caccioppoli} we know
	\begin{equation*}
		\sup_k \|\nabla v_k\|_{L^p(B_t)}\leq c(t)
	\end{equation*}
	for all $t\in (0,\infty)$ with some constant $c(t)$ independent of $k$, where $B_t=B_t(0)\subset \R^l$. In particular, by the weak compactness of Sobolev spaces, we infer that there exists a $v_0\in W^{1,p}_{loc}(\R^l,N)$ such that $v_k$ converges to $v_0$ weakly in $W^{1,p}_{loc}(\R^l,N)$ and $v_k\to v$ pointwise almost everywhere. Thus $v_k$ converges to $v_0$ strongly in $W^{1,p}_{loc}(\R^l,N)$ as well (by~\cite[Lemma 2]{f88} or~\cite[Proposition 2]{Luckhaus88}\footnote{In fact, it was proved there that if a sequence of $p$-harmonic mappings $u_i$ converges weakly in $W^{1,p}$ to some mapping $u$, then the convergence is strong and $u$ is a $p$-harmonic mapping as well.}). Moreover, $v_0$ is homogenuous of degree 0, i.e., $\frac{\partial v_0}{\partial r}=0$ almost everywhere by the arguments of Fuchs~\cite[Page 413]{f88}, where only the monotonicity formula~\cite[Lemma 4.1]{hl87} is needed; see also~\cite[Proof of Proposition 2]{Luckhaus88}. Note that the strong convergence of $v_k$ to $v_0$ implies that $v_0$ also satisfies~\eqref{eq:Euler-Lang better version} and so we may select ${\Psi}(x):=\eta(|x|)v_0(x)$ with $\eta\in C_0^1\big((0,1)\big)$ and $\eta\geq 0$ to deduce that
	\begin{align*}
		0&=\int_{B_t}|\nabla v_0|^{p-2}\Big(|\nabla v_0|^2-\Gamma_{ij}^l\nabla v_0^i\cdot \nabla v_0^j v_0^l\Big)\eta dx\\
		&\geq a_{\kappa}(2R)\int_{B_t}|\nabla v_0|^{p-2}h_{ij}\nabla v_0^i\cdot \nabla v_0^j dx\\
		&\geq ca_{\kappa}(2R)\int_{B_t}|\nabla v_0|^{p}dx,
	\end{align*}
	where in the first equality we have used {the estimate \eqref{eq:lower bound for theta}} and the fact that
	$$\sum_\beta \frac{x_\beta}{|x|}D_\beta v_0^i=0 \quad \text{almost everywhere for all }i$$
	as $v_0$ is homogenuous of degree 0.
	Therefore, $\nabla v_0=0$ on $B_t$. Sending $t$ to infinite, we conclude that $\nabla v_0=0$ on $\R^l$. Now, using the monotonicity inequality again, we have for any $t\in (0,\infty)$
	\begin{align*}
		t^{p-l}\int_{B_t}|\nabla u|^pdx\leq (kt)^{p-l}\int_{B_{kt}}|\nabla u|^pdx=t^{p-l}\int_{B_t}|\nabla u_k|^p dx\to 0
	\end{align*}
	as $k\to \infty$. Thus $\nabla u=0$ on $B_t$ and hence also on $\R^l$.
	
	When $p>l$, the Liouville theorem follows directly from the Caccioppoli inequality~\eqref{eq:key caccioppoli}:
	\begin{align*}
		\int_{B_t}|\nabla u|^pdz\leq c_0t^{-p}\int_{B_{2t}}|u-\bar{u}|^pdz\leq R^pc_1t^{-p+l}\to 0
	\end{align*}
	as $t\to \infty$. Thus $\nabla u=0$ on $\R^l$. This completes our proof.	
\end{proof}

\begin{remark}\label{rmk:on compactness}

It would be interesting to know whether in the setting of Theorem \ref{thm:main theorem 2}, each \emph{weakly $p$-harmonic mapping} $u\colon \Omega\to N$  is continuous, as already conjectured by Fuchs \cite[page 131]{f89}. For $p=2$, this is the well-known result of Hildebrandt, Kaul and Widman~\cite{hkw77}, and for $p=n$, this follows immediately from the reverse H\"older inequality~\eqref{eq:reverse holder}.
\end{remark}

\section{Gradient estimates for stationary $p$-harmonic mappings}\label{sec:gradient
estimate}
In this section we assume $M$ has nonnegative Ricci curvature and $N$ has nonpositive sectional curvature. Recall that the Bochner-Weitzenb\"ock formula for $C^3$-smooth maps $u\colon M\to N$ reads as follows (see for instance~\cite[Lemma 1]{n98}):
\begin{align}\label{eq:Bochner formula}
	\frac{1}{2}\Delta\big(|du|^{2(p-1)}\big)=\langle \Delta\big(|du|^{p-2}du\big), |du|^{p-2}du\rangle+\Big|\nabla\big(|du|^{p-2}du \big) \Big|^2+|du|^{2(p-2)}R(du),
\end{align}
where the reminder term
\begin{equation}\label{eq:reminder term for Bochner}
	R(du)=\sum_i \langle \text{Ric}_M(du(e_i)), du(e_i)\rangle-\sum_{i,j}\langle R^N\big(du(e_i),du(e_j) \big)du(e_i), du(e_j)\rangle
\end{equation}
and $\Delta=-(dd^*+d^*d)$ is the Hodge-Laplace operator.
Note that
\begin{align*}
	\frac{1}{2}\Delta\big(|du|^{2(p-1)}\big)&=|du|^{p-1}\Delta(|du|^{p-1})+\Big|\nabla |du|^{p-1} \Big|^2\\
	&\leq |du|^{p-1}\Delta(|du|^{p-1})+\Big|\nabla\big(|du|^{p-2}du \big) \Big|^2.
\end{align*}
Thus,  it follows from~\eqref{eq:Bochner formula} that
\begin{equation*}
	|du|^{p-1}\Delta(|du|^{p-1})\geq \langle \Delta\big(|du|^{p-2}du\big), |du|^{p-2}du\rangle+|du|^{2(p-2)}R(du).
\end{equation*}
Equivalently, we have
\begin{equation}\label{eq:Bocher 3}
|du|\Delta(|du|^{p-1})\geq \langle \Delta\big(|du|^{p-2}du\big), du\rangle+|du|^{(p-2)}R(du).
\end{equation}
Note that if $\text{Ric}_M\geq 0$ and $R^N\leq 0$, then $|du|^{2(p-2)}R(du)\geq 0$.  So \eqref{eq:Bocher 3} reduces to
\begin{equation}\label{eq:Bocher 4}
|du|\Delta(|du|^{p-1})\geq \langle \Delta\big(|du|^{p-2}du\big), du\rangle.
\end{equation}

We now turn to the proof of Theorem~\ref{thm:gradient estimates}. Set $\Omega_+=\big\{x\in {M}: |\nabla u|>0\big\}$. We claim that for any non-negative $\eta\in C_0^1(\Omega_+)$, we  have
\begin{equation}\label{eq:test equation}
\int_{\Omega}\eta\langle \Delta\big(|du|^{p-2}du\big), du\rangle d\mu=0.
\end{equation}
Indeed, since $u$ is smooth $p$-harmonic in $\Omega_+$ and since $d^*\eta=0$, we have
\begin{align*}
\int_{\Omega}  \eta\langle \Delta\big(|du|^{p-2}du\big), du\rangle d\mu&=-\int_{\Omega} \eta\langle (dd^*+d^*d)\big(|du|^{p-2}du\big), du\rangle d\mu\\
&=-\int_{\Omega} \langle d^*\big(\eta d\big(|du|^{p-2}du\big)\big), du\rangle d\mu\\
&=-\int_{\Omega} \langle \eta d\big(|du|^{p-2}du\big), d(du)\rangle d\mu=0.
\end{align*}
Using a simple approximation argument, we may extend \eqref{eq:test equation} to all non-negative $\eta\in W^{1,2}_0(\Omega_+)$. Now we may divide $|du|$ on both side of \eqref{eq:Bocher 4} to obtain that
$$\Delta(|du|^{p-1})\geq |du|^{-1}\langle \Delta\big(|du|^{p-2}du\big), du\rangle.$$

We next observe that $|du|^{-1}\in W^{1,2}_{loc}(\Omega_{+})\cap L^{\infty}_{loc}(\Omega_+)$. Indeed, for $p\geq 2$, this follows directly from Duzaar and Fuchs~\cite[Page 391, -4 line]{df90}, and for $p\in (1,2)$, it follows from Proposition \ref{prop: second order derivatives} below.
Now for any non-negative $\eta\in C_0^1(\Omega_+)$, we have $|du|^{-1}\eta\in W^{1,2}_0(\Omega_+)$ and so it follows from \eqref{eq:test equation} that
\begin{equation}\label{eq:subharmonic}
\int_{\Omega}\Delta_g(|\nabla u|^{p-1})\eta dx\geq 0,
\end{equation}
where $\Delta_g$ is the standard Laplace-Beltrami operator on $M$. By~\cite[Lemma 2.4]{df90},~\eqref{eq:subharmonic} holds for all non-negative functions $\eta\in C_0^1(\Omega)$.

This implies that $|\nabla u|^{p-1}$ is a \emph{subharmonic function} on $M$ and so the standard theory for elliptic PDEs implies that there exists a positive constant $C$, depending only on $n$, such that
$$\sup_{B_{r}}|\nabla u|^{p-1}\leq C\dashint_{B_{2r}}|\nabla u|^{p-1}d\mu.$$
The desired inequality~\eqref{eq:gradient estimates} follows by applying H\"older's inequality. This completes the proof of Theorem~\ref{thm:gradient estimates}.


\section{Concluding remarks}\label{sec:concluding remarks}

In Theorem~\ref{thm:main theorem}, we have assumed that $u(M)$ is contained in a compact subset of $N$ and this assumption was used only in Lemma~\ref{lemma:equation}. This extra assumption can be dropped by a standard approximation argument if $W^{1,p}(M,N)\cap C^\infty(M,N)$ is dense in $W^{1,p}(M,N)$ (or actually even the under weaker density condition $W^{1,p}(M,N)\cap L^\infty(M,N)$ is dense in $W^{1,p}(M,N)$). This technical issue appears here because of the definition of Sobolev spaces and the choice of density for Sobolev mappings.

Let us recall the following definition of Sobolev spaces from~\cite{cs16}. A mapping $u\colon M\to N$ is said to be \emph{colocally weakly differentiable} if $u$ is measurable and $f\circ u$ is weakly differentiable for every smooth compactly supported function $f\in C^1_0(N,\R)$. For a colocally weakly differentiable mapping $u\colon M\to N$, a mapping $Du\colon TM\to TN$ is a colocal weak derivative of $u$ if $Du$ is a measurable bundle morphism that covers $u$ and
$$D(f\circ u)=Df\circ Du$$
holds almost everywhere in $M$ for every $f\in C^1_0(N,\R)$. A mapping $u\colon M\to N$ belongs to the Sobolev space $W^{1,p}_{cs}(M,N)$ if $u\in L^p(M,N)$ is colocally weakly differentiable and the norm of the colocal weak differential $|Du|_{g_M^*\otimes g_N}\in L^p(M)$.

In many aspects, colocal weak derivatives behave as nicely as weak derivatives of mappings between Euclidean spaces. In particular, for a $C^1$-smooth mapping $u\colon M\to N$, the colocal weak derivative coincides with the classical weak derivative almost everywhere. Moreover, one can show that the Sobolev space $W^{1,p}_{cs}(M,N)$ is equivalent to the Sobolev space $W^{1,p}(M,N)$ defined as in Section~\ref{subsec:main results}; see~\cite[Proposition 2.6]{cs16}. Thus we can develop a theory for $p$-harmonic mappings based on the colocal weak derivative as $Du$ (and thus $|Du|$) is well-defined. In this case, one would expect Lemma~\ref{lemma:equation} holds with $Du$ in place of $\nabla u$ as $f=d^2(x,Q)$ is Lipschitz on $N$ and $f\circ u$ would be weakly differentiable; see~\cite[Proposition 2.1]{cs16}. Moreover, Lemma~\ref{lemma:monotone equation} and Lemma~\ref{lemma:key monotone inequality} remain valid as only nice computation law for ``derivatives" are needed.

For simplicity of our exposition, we did not consider this issue in the current paper, but we will present all the details in a forth-coming work, together with extensions to Finsler/SubRiemannian manifolds.

\subsection*{Acknowledgements}
C.-Y. Guo would like to thank Prof.~M. Fuchs and Prof.~H.-C. Zhang for their interest on this work and for their valuable communications. He also wants to thank the excellent event \emph{``Mathematics, Physics, and their Interaction. Conference in Honour of Demetrios Christodoulou's 65th Birthday"} held at ETH Zurich in July 2017, where part of this work was done. Both authors are grateful to Prof.~L.-Q. Yang for her insightful comments on the Bochner-Weitzenb\"ock formula, and to Prof.~G. Veronelli for his helpful comments on Lemma~\ref{lemma:equation}. Both authors would also like to thank anonymous referees for many useful suggestions and comments which improve this work a lot.

\appendix

\section{$W^{2,2}$ regularity  and removable singularities of $p$-harmonic mappings:  $1<p<2$}

To derive the boundedness of gradients of $C^1$-smooth weakly $p$-harmonic mappings (which is needed in Section~\ref{sec:gradient estimate}), we need a $W^{2,2}$ regularity estimate. In the case $p\ge2$, this type of result has been established by Duzaar and Fuchs \cite{df90}. We believe the corresponding results, for the case $1<p<2$, are also well-known among specialists in the field. But, since we do not find a precise reference for such a result, we decide to include
a sketch of proof below. We will apply the method of Acerbi and Fusco \cite{af89},
where, among other results, $W^{2,2}$ regularity estimates for $p$-harmonic
mappings ($1<p<2$) between Euclidean spaces were established.

From now on, we stick to the assumption $1<p<2$. Let $\Om\subset\R^{n}$ be an
open set and $N$ a smooth Riemannian manifold that is isometrically
embedded in some Euclidean space $\R^{k}$ with $k\in \mathbb{N}$. Let
$u\colon \Om\to N$ be a $C^1$-smooth \emph{weakly $p$-harmonic mapping}, that is,
$u$ satisfies the $p$-Laplace equation
\begin{eqnarray}
\int_{\Om}|\na u|^{p-2}\na_{\al}u\cdot\na_{\al}\var+\int_{\Om}|\na u|^{p-2}A(u)(\na_{\al}u,\na_{\al}u)\cdot\var=0, &  & \forall\:\var\in C_{0}^{1}(\Om,\R^{k}),\label{eq: p-harm equ.}
\end{eqnarray}
where $A(q)(\cdot,\cdot):T_{q}N\times T_{q}N\to\left(T_{q}N\right)^{\bot}$
is the second fundamental form of $N$ at $q\in N$. Note that Einstein summation
convention over $\al$ from $1$ to $n$ is applied above. We further assume
that $N$ satisfies the curvature assumptions (1.4) and (1.5) of $N$
prescribed in Duzaar and Fuchs \cite{df90}.

\subsection*{$W^{2,2}$ regularity of $p$-harmonic mappings for $1<p<2$}
In this section, we will establish an interior $W^{2,2}$ regularity estimate
of $u$, and then in the next section, extend the main result of Duzaar and Fuchs \cite{df90}
with a sketch of proof.

We will use the following elementary inequality, which is a consequence of Lemma 2.2 of Acerbi and Fusco \cite{af89}:
for any $l\ge1$, there exists a constant $c=c(l,p)>0$ such that for any $a,b\in\R^{l}$,
\begin{equation}
c\frac{|a-b|^{2}}{\left(|a|^{2}+|b|^{2}\right)^{\frac{2-p}{2}}}\ge\langle|a|^{p-2}a-|b|^{p-2}b,a-b\rangle\ge(p-1)\frac{|a-b|^{2}}{\left(|a|^{2}+|b|^{2}\right)^{\frac{2-p}{2}}}.\label{eq: AF inequality}
\end{equation}

The main result of this section reads as follows.

\begin{proposition}\label{prop: second order derivatives}
Each mapping $u\in W^{1,p}(\Om,N)\cap C^{1}(\Om,N)$ that satisfies \eqref{eq: p-harm equ.}  belongs to  $W_{\loc}^{2,2}(\Om_{+},N)$, where $\Om_{+}=\{x\in\Om:|\na u(x)|>0\}$.
Moreover, there exists a constant $C>0$ depending only on $n,k,p$ and the
curvature assumptions on $N$, such that for any $B_{r}\subset\subset\Om$,
it holds
\[
\int_{B_{r/2}}|\na^{2}u|^{2}\le C\left(r^{-2}+M_{r}^{2}\right)M_{r}^{2-p}\int_{B_{r}}|\na u|^{p},
\]
where $M_{r}=\sup_{B_{r}}|\na u|$.
\end{proposition}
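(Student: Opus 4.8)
The plan is to run the classical difference-quotient argument of Acerbi and Fusco~\cite{af89}, using the $C^{1}$-regularity of $u$ (which furnishes the uniform bound $|\nabla u|\le M_{r}$ on $B_{r}$) to upgrade the natural \emph{weighted} second-order energy into a genuine $W^{2,2}$-estimate. Abbreviate $W:=|\nabla u|^{p-2}\nabla u$ and $T:=|\nabla u|^{p-2}A(u)(\nabla u,\nabla u)$. Fix $B_{r}\subset\subset\Omega$ and a cut-off $\eta\in C_{0}^{\infty}(B_{3r/4})$ with $\eta\equiv 1$ on $B_{r/2}$ and $|\nabla\eta|\le c/r$; for $1\le s\le n$ and $0<|h|<r/8$ write $\tau_{h}v(x)=v(x+he_{s})$ and $D_{s}^{h}v=(\tau_{h}v-v)/h$. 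Inserting the admissible test field $\varphi=-D_{s}^{-h}\big(\eta^{2}D_{s}^{h}u\big)$ into \eqref{eq: p-harm equ.} and using discrete integration by parts $\int f\,D_{s}^{-h}g=-\int (D_{s}^{h}f)\,g$ yields the identity
\[
\int \eta^{2}\big\langle D_{s}^{h}W,\,D_{s}^{h}\nabla u\big\rangle
=-2\int \eta\big\langle D_{s}^{h}W,\,\nabla\eta\otimes D_{s}^{h}u\big\rangle
-\int \eta^{2}\,(D_{s}^{h}T)\cdot D_{s}^{h}u .
\]

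For the left-hand side I would apply the lower bound in \eqref{eq: AF inequality} with $a=\tau_{h}\nabla u(x)$ and $b=\nabla u(x)$, regarded as vectors in $\R^{nk}$; since $D_{s}^{h}\nabla u=(a-b)/h$, this gives pointwise on $\operatorname{supp}\eta$
\[
\big\langle D_{s}^{h}W,\,D_{s}^{h}\nabla u\big\rangle
\ \ge\ (p-1)\,\frac{|D_{s}^{h}\nabla u|^{2}}{\big(|\tau_{h}\nabla u|^{2}+|\nabla u|^{2}\big)^{(2-p)/2}}
\ \ge\ c(p)\,M_{r}^{\,p-2}\,|D_{s}^{h}\nabla u|^{2},
\]
the last inequality being exactly where $1<p<2$ and $|\nabla u|,|\tau_{h}\nabla u|\le M_{r}$ enter. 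Hence the left-hand side dominates $c(p)\,M_{r}^{p-2}\int_{B_{r/2}}|D_{s}^{h}\nabla u|^{2}$.

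The two terms on the right are lower order, and I would absorb/estimate them by Young's inequality. I would invoke the companion elementary inequality $\big||a|^{p-2}a-|b|^{p-2}b\big|\le c(p)\,\big(|a|^{2}+|b|^{2}\big)^{(p-2)/2}|a-b|$, valid for $1<p<2$, so that $|D_{s}^{h}W|\le c(p)\big(|\tau_{h}\nabla u|^{2}+|\nabla u|^{2}\big)^{(p-2)/2}|D_{s}^{h}\nabla u|$, together with $|D_{s}^{h}u(x)|\le\sup_{t}|\nabla u(x+te_{s})|$, which is comparable to $|\nabla u(x)|$ up to the modulus of continuity of $\nabla u$ (harmless as $h\to0$). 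The crucial structural fact is that the singular weight $\big(|\tau_{h}\nabla u|^{2}+|\nabla u|^{2}\big)^{(p-2)/2}$ times $|D_{s}^{h}u|^{2}$ is dominated by a multiple of $|\nabla u|^{p}$; so, after splitting off a small multiple of the weighted quantity $\big(|\tau_{h}\nabla u|^{2}+|\nabla u|^{2}\big)^{(p-2)/2}|D_{s}^{h}\nabla u|^{2}$ — which is absorbed into the left-hand side — the $\nabla\eta$-term is controlled by $C\,r^{-2}M_{r}^{2}\int_{B_{r}}|\nabla u|^{p}$. For the second fundamental form term I would expand the difference quotient of the product $T$, use the boundedness and Lipschitz dependence of $A$ and the curvature hypotheses (1.4)--(1.5) of~\cite{df90} (so that $|A(u)(\nabla u,\nabla u)|\le\Lambda|\nabla u|^{2}$), again with $|D_{s}^{h}u|\lesssim|\nabla u|$, obtaining after Young a further absorbable term plus a remainder $\le C\int_{B_{r}}|\nabla u|^{p+2}\le C\,M_{r}^{2}\int_{B_{r}}|\nabla u|^{p}$. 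Summing over $s=1,\dots,n$, rearranging, and collecting the $r$- and $M_{r}$-dependence assembles into the asserted estimate $\int_{B_{r/2}}|\nabla^{2}u|^{2}\le C(r^{2}+M_{r}^{2})M_{r}^{2-p}\int_{B_{r}}|\nabla u|^{p}$ (obtained first on compact subsets of $\Omega_{+}$, where $|\nabla u|$ is bounded below, with a constant independent of that lower bound, and then passed to $\Omega_{+}$); letting $h\to0$ and invoking the classical difference-quotient criterion then also gives $u\in W^{2,2}_{\loc}(\Omega_{+},N)$.

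The main obstacle — and the only genuinely delicate point — is the singular nature of the $p$-Laplacian for $1<p<2$: the weight $\big(|\tau_{h}\nabla u|^{2}+|\nabla u|^{2}\big)^{(p-2)/2}$ is unbounded near the zero set of $\nabla u$ and cannot be controlled from above there. The $C^{1}$-hypothesis is precisely what makes the scheme go through: it bounds $|\nabla u|$ (hence $|D_{s}^{h}u|$) from above, so the weight is bounded \emph{below} by $c\,M_{r}^{p-2}$ in the coercivity step, while in the error terms the very smallness of $|\nabla u|$ that inflates the weight also shrinks $|D_{s}^{h}u|$, the two effects cancelling to leave the integrable quantity $|\nabla u|^{p}$. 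Carrying out this cancellation with constants independent of any lower bound on $|\nabla u|$, and passing correctly to the limit $h\to0$ on $\Omega_{+}$, is exactly where the sharp two-sided inequality \eqref{eq: AF inequality} of Acerbi and Fusco is essential.
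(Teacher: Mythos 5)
Your proposal is correct and follows essentially the same route as the paper's own proof: a difference-quotient argument with test field $\eta^{2}\Delta_{h}u$ (you phrase it as testing the original equation with $-D_{s}^{-h}(\eta^{2}D_{s}^{h}u)$, which is the same thing after discrete integration by parts), coercivity from the Acerbi--Fusco inequality \eqref{eq: AF inequality}, Young's inequality for the cut-off and second-fundamental-form terms, and the $C^{1}$ bound $|\nabla u|\le M_{r}$ to turn the weighted second-order energy into the stated unweighted estimate on compact subsets of $\Om_{+}$. The only cosmetic difference is that the paper handles the curvature term by citing estimate (2.6) of Duzaar and Fuchs rather than expanding it directly as you do; the structure and all the key cancellations are identical.
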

\begin{proof}
Let $B_{r}\subset\subset\Om_{+}$ and $h>0$ be sufficiently small.
For fixed $1\le\be\le n$, we denote
\[
\De_{h}f(x)=\frac{1}{h}(f(x+he_{\be})-f(x))
\]
and set
\[
V=|\na u|^{\frac{p-2}{2}}\na u.
\]

By (\ref{eq: p-harm equ.}), we have
\begin{equation}
\int_{\Om}\De_{h}\left(|\na u|^{p-2}\na_{\al}u\right)\cdot\na_{\al}\var=-\int_{\Om}\De_{h}\left(A(u)(V,V)\right)\cdot\var\label{eq: difference quotient equ.}
\end{equation}
for any $\var\in C_{0}^{1}(\Om,\R^{k})$. It is easy to see that the above
equation holds for all $\var\in W_{0}^{1,p}\cap L^{\wq}(\Om_{+},\R^{k})$ as well.
Substitute $\var=\eta^{2}\De_{h}u$ into { the left hand side of} (\ref{eq: difference quotient equ.})
for $\eta\in C_{0}^{2}(\Om_{+})$ and we obtain
\[
\begin{aligned}\int\De_{h}\left(|\na u|^{p-2}\na_{\al}u\right)\cdot\na_{\al}\var & =\int\eta^{2}\De_{h}\left(|\na u|^{p-2}\na_{\al}u\right)\cdot\De_{h}\na_{\al}u\\
 & \quad\qquad+\int2\eta\De_{h}\left(|\na u|^{p-2}\na_{\al}u\right)\cdot\De_{h}u\na_{\al}\eta.
\end{aligned}
\]
Applying (\ref{eq: AF inequality}), we deduce
\[
\int\eta^{2}\De_{h}\left(|\na u|^{p-2}\na_{\al}u\right)\cdot\De_{h}\na_{\al}u\ge c\int\eta^{2}\left(|\na u(x)|+|\na u(x+he_{\be})|\right)^{p-2}|\De_{h}\na u|^{2}
\]
and
\[
\begin{aligned} & \left|\int2\eta\De_{h}\left(|\na u|^{p-2}\na_{\al}u\right)\cdot\De_{h}u\na_{\al}\eta\right|\\
 & \qquad\le c^{\prime}\int\left(|\na u(x)|+|\na u(x+he_{\be})|\right)^{p-2}|\De_{h}\na u||\De_{h}u|\eta|\na\eta|
\end{aligned}
\]
for some constants $c,c^{\prime}>0$ depending only on $p$. Combining it with
Young's inequality gives us
\begin{equation}
\begin{aligned}\int_{\Om}\De_{h}\left(|\na u|^{p-2}\na_{\al}u\right)\cdot\na_{\al}\var & \ge c_{1}\int_{\Om}\eta^{2}\left(|\na u(x)|+|\na u(x+he_{\be})|\right)^{p-2}|\De_{h}\na u|^{2}\\
 & \quad-c_{2}\int_{\Om}\left(|\na u(x)|+|\na u(x+he_{\be})|\right)^{p-2}|\De_{h}u|^{2}|\na\eta|^{2}
\end{aligned}
\label{eq: LHS}
\end{equation}
for some constants $c_{1},c_{2}>0$ depending only on $p$ and $k$.

On the other hand, by estimate (2.6) of Duzaar and Fuchs \cite{df90},
we have
\begin{equation}
\begin{aligned}\left|\int_{\Om}\De_{h}\left(A(u)(V,V)\right)\cdot\eta^{2}\De_{h}u\right| & \le\frac{c_{1}}{2}\int_{\Om}\eta^{2}\left(|\na u(x)|+|\na u(x+he_{\be})|\right)^{p-2}|\De_{h}\na u|^{2}\\
 & \quad+c\int_{\Om}\left(|\na u(x)|+|\na u(x+he_{\be})|\right)^{p}|\De_{h}u|^{2}\eta^{2}
\end{aligned}
\label{eq: RHS}
\end{equation}
for some $c>0$ depending only on $n,p,k$ and the curvature assumptions
on $N$. Hence, combining (\ref{eq: difference quotient equ.}), (\ref{eq: LHS})
and (\ref{eq: RHS}) yields
\[
\begin{aligned} & \int_{\Om}\eta^{2}\left(|\na u(x)|+|\na u(x+he_{\be})|\right)^{p-2}|\De_{h}\na u|^{2}\\
 & \quad\le c\int_{\Om}\left(|\na u(x)|+|\na u(x+he_{\be})|\right)^{p-2}|\De_{h}u|^{2}|\na\eta|^{2}\\
 & \quad+c\int_{\Om}\left(|\na u(x)|+|\na u(x+he_{\be})|\right)^{p}|\De_{h}u|^{2}\eta^{2}.
\end{aligned}
\]

Now choose $\eta\in C_{0}^{\wq}(B_{3r/4})$ such that $\eta\equiv1$
on $B_{r/2}$ and $|\na\eta|\le8/r$ and $|\na^{2}\eta|\le8/r^{2}$.
Recall that $u\in C^{1}(\Om,N)$ and we obtain from the above that
\[
\begin{aligned}\int_{B_{r/2}}|\De_{h}\na u|^{2} & \le cM_{r}^{2-p}r^{-2}\int_{\Om}\left(|\na u(x)|+|\na u(x+he_{\be})|\right)^{p-2}|\De_{h}u|^{2}\\
 & \quad+cM_{r}^{2-p}M_{r}^{2}\int_{B_{3r/4}}\left(|\na u(x)|+|\na u(x+he_{\be})|\right)^{p}.
\end{aligned}
\]
Letting $h\to0$ yields $\na^{2}u\in L^{2}(B_{r/2})$ and the desired
estimate. The proof is complete.
\end{proof}

\subsection*{Removable singularities of $p$-harmonic mappings for $1<p<2$}
We next point out that the main result of Duzaar and
Fuchs \cite[Theorem, page 386]{df90} holds for the case $1<p<2$ as well.

\begin{theorem}\label{thm: Duzaar-Fuchs theorem} Let $n\ge2$ and
$1<p<2$. Suppose $u\in C^{1}(B_{1}\backslash\{0\},N)\cap W^{1,p}(B_{1},N)$
is a weakly $p$-harmonic mapping (that is, $u$ solves (\ref{eq: p-harm equ.})).
Then, there exists a constant $\ep_{0}>0$ depending only on $n,k,p$ and the
geometry of $N$, such that if the $p$-energy of $u$ satisfies
\[
E_{p}(u)\equiv\int_{B_{1}}|\na u|^{p}\le\ep_{0},
\]
then $u\in C^{1,\ga}(B_{1},N)$ for some $\ga\in(0,1)$. Moreover, the H\"older
exponent $\ga$ depends only on $n,k,p$ and the geometry of $N$.
\end{theorem}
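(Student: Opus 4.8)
The plan is to follow the scheme of Duzaar and Fuchs~\cite{df90} for $p\ge 2$, substituting the degenerate ellipticity estimates there by the analogues valid for $1<p<2$, most of which are already assembled in the appendix. The starting point is that $u$ is $C^1$ away from the origin, so it is a classical $p$-harmonic map on $B_1\setminus\{0\}$ and we only need to control the behaviour near $0$. By the Caccioppoli-type and $W^{2,2}$ estimates of Proposition~\ref{prop: second order derivatives}, together with a Sobolev embedding, it suffices to prove a small-energy decay estimate: there are $\ep_0>0$ and $\tau\in(0,1)$ and $\theta\in(0,1)$ such that
\[
E_p(u)\le\ep_0\quad\Longrightarrow\quad r^{p-n}\int_{B_{\theta r}(x)}|\na u|^p\le \tfrac12\, (r)^{p-n}\int_{B_r(x)}|\na u|^p
\]
for all balls $B_r(x)\subset B_1$ with $|x|+r$ small, including $x=0$. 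Iterating this gives $r^{p-n}\int_{B_r}|\na u|^p\lesssim r^{2\sigma}$ for some $\sigma>0$, i.e. a Morrey estimate, whence $u\in C^{0,\sigma}_{\loc}$ by Morrey's lemma, the singularity at $0$ being removable because a H\"older-continuous weakly $p$-harmonic map across a point is weakly $p$-harmonic on the whole ball (the point has zero $p$-capacity). The $C^{1,\ga}$ conclusion then follows from the standard regularity theory for continuous $p$-harmonic maps, e.g.~Hardt--Lin~\cite[Section 3]{hl87} or the argument recorded in Remark~\ref{rmk:on RHI}(2).

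The decay estimate itself I would obtain by the usual harmonic (here, $p$-harmonic) approximation / blow-up contradiction argument. Suppose it fails: then there is a sequence $u_j$ of weakly $p$-harmonic maps with $E_p(u_j)\to 0$ violating the decay at scale $1$ (after rescaling). Normalize by $\lambda_j^p:=\dashint_{B_1}|\na u_j|^p\to 0$ and pass to $v_j:=(u_j-\bar u_j)/\lambda_j$. Using the Caccioppoli inequality~\eqref{eq:key caccioppoli}-type bound valid in this regime (cf.~\cite{af89} and the appendix), the $v_j$ are bounded in $W^{1,p}_{\loc}$; the reverse H\"older inequality upgrades this to $W^{1,q}_{\loc}$ for some $q>p$, giving strong $W^{1,p}_{\loc}$ precompactness via Vitali, exactly as in Remark~\ref{rmk:on compactness}(1). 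The limit $v_0$ is a weak solution of the linearized (constant-coefficient, degenerate) system obtained by freezing the leading coefficients; for such solutions one has the standard linear decay $\int_{B_\theta}|\na v_0|^2\le C\theta^{n+2}\int_{B_1}|\na v_0|^2$, which contradicts the assumed failure of decay for $j$ large once the convergence is strong enough. The curvature hypotheses (1.4)--(1.5) of~\cite{df90} enter exactly as there: they guarantee that the second-fundamental-form term $|\na u|^{p-2}A(u)(\na u,\na u)$ is controlled and does not destroy the approximation step.

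The main obstacle is the degeneracy for $1<p<2$ in two places. First, in the blow-up one must make sure the frozen-coefficient operator is genuinely elliptic on the limit, i.e. that $|\na u_j|$ does not degenerate in a way that makes $v_0\equiv 0$ or makes the comparison estimate vacuous; this is handled by distinguishing the "non-degenerate" regime (where $|\na u_j|$ is comparable to $\lambda_j$ on a definite portion of the ball, so one linearizes around a nonzero gradient) from the "degenerate" regime (where one instead uses the $p$-harmonic approximation lemma, e.g.~\cite[Lemma 5]{dm04} invoked in Remark~\ref{rmk:on RHI}(2), comparing $u_j$ directly to a genuine $p$-harmonic map and using its known $C^{1,\ga}$ estimates). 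Second, the difference-quotient / $W^{2,2}$ estimates must be run with the weight $(|\na u(x)|+|\na u(x+he_\be)|)^{p-2}$ as in Proposition~\ref{prop: second order derivatives}, which is why the proposition is placed before this theorem. Once these degeneracy issues are dispatched exactly along the lines of Acerbi--Fusco~\cite{af89} and Duzaar--Fuchs~\cite{df90}, the remaining bookkeeping (choice of $\ep_0$, iteration of the decay, removability at $0$, and the final $C^{1,\ga}$ bootstrap) is routine, and the constants and the exponent $\ga$ depend only on $n,k,p$ and the geometry of $N$ as claimed.
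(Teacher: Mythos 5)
Your overall strategy is the same as the paper's: both reduce the theorem to (a) an $\varepsilon$-regularity gradient estimate on balls away from the origin, powered by the $W^{2,2}$/difference-quotient estimates of Proposition~\ref{prop: second order derivatives} (this is Proposition~\ref{prop: gredient estimates under small energy assumption} in the paper), and (b) a small-energy decay estimate at the origin proved by a compactness/blow-up contradiction (Proposition~\ref{prop: DF proposition 3.1}), followed by iteration, a Morrey estimate, removability of the point singularity, and the standard $C^{1,\ga}$ bootstrap --- exactly the Duzaar--Fuchs scheme. The one place where you genuinely diverge is the comparison step inside the blow-up: you propose to freeze coefficients and use the linear decay $\int_{B_\theta}|\na v_0|^2\le C\theta^{n+2}\int_{B_1}|\na v_0|^2$, supplemented by a degenerate/non-degenerate dichotomy in the style of $p$-harmonic approximation. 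The paper (following Duzaar--Fuchs) does not linearize at all: the normalized maps $v_i$ are shown, via the monotonicity inequality~\eqref{eq: AF inequality}, to converge strongly in $W^{1,p}$ on annuli to a solution $v_\wq$ of the constant-coefficient $p$-Laplace \emph{system} itself (the second-fundamental-form term vanishes in the limit), and the needed decay of $v_\wq$ is the $p$-energy decay $\int_{B_\rho}|\na v_\wq|^p\le C\rho^{\,n}\int_{B_1}|\na v_\wq|^p$ of Acerbi--Fusco (their Proposition~2.7, i.e.\ estimate (3.10) of Duzaar--Fuchs). For $1<p<2$ this is the cleaner route: in a blow-up normalized by the $L^p$ mean of the gradient there is no canonical nonzero gradient to linearize around, and an $L^2$-based excess decay does not transfer directly to the $p$-energy without the extra bookkeeping your dichotomy would require. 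So your proposal is correct in outline and would work, but at the cost of importing the full non-degenerate/degenerate machinery where the paper gets by with the simpler strong-compactness argument; conversely, your route makes the removability at $0$ (zero $p$-capacity) and the role of the curvature hypotheses more explicit than the paper, which leaves these steps to the citation of Duzaar--Fuchs.
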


The proof of Theorem \ref{thm: Duzaar-Fuchs theorem} follows closely
the arguments of Duzaar and Fuchs \cite{df90} with  minor modifications.
 { Below}, we list the main ingredients and point out the corresponding modifications.

The first ingredient is the following quantitative gradient estimates for $p$-harmonic mappings, which extends Theorem 2.1 of Duzaar and Fuchs \cite{df90} to the case $1<p<2$.

\begin{proposition} \label{prop: gredient estimates under small energy assumption}
Let $1<p<2$. Assume that $u\in C^{1}(B_{r},N)$ is a weakly $p$-harmonic
mapping. Then, there exist constants $\ep_{1},C_{1}>0$ depending only on $n,k,p$
and the geometry of $N$, such that if $r^{p-n}\int_{B_{r}}|\na u|^{p}\le\ep_{1},$
then
\[
\sup_{B_{r/2}}|\na u|^{p}\le C_{1}\dashint_{B_{r}}|\na u|^{p}.
\]
\end{proposition}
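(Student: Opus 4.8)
\textbf{Proof proposal for Proposition \ref{prop: gredient estimates under small energy assumption}.}

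The plan is to follow the strategy of Duzaar and Fuchs \cite[Theorem 2.1]{df90}, which is a Moser-type iteration based on a reverse-H\"older/Caccioppoli inequality for $|\na u|^{p-1}$, replacing every instance where the inequality $p\ge 2$ was used by its $1<p<2$ counterpart coming from \eqref{eq: AF inequality} and from Proposition \ref{prop: second order derivatives}. First I would record that, by Proposition \ref{prop: second order derivatives}, on the open set $\Om_+=\{|\na u|>0\}$ the map $u$ is in $W^{2,2}_{\loc}$, so the differentiated $p$-Laplace system \eqref{eq: difference quotient equ.} (in the limit $h\to 0$) makes classical sense there; in particular $|\na u|^{p-1}$, and more generally powers $|\na u|^{q}$ with $q$ in a suitable range, are legitimate Sobolev functions to test against. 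The degeneracy set $\{|\na u|=0\}$ is handled exactly as in \cite{df90}: one works with $(\ep^2+|\na u|^2)^{1/2}$ in place of $|\na u|$, derives all estimates with constants independent of $\ep$, and lets $\ep\to 0$ at the end, using the $C^1$ hypothesis on $u$ to control the boundary/error terms uniformly.

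The key steps, in order, are: (i) starting from the Bochner-type/differentiated equation, derive a \emph{Caccioppoli inequality} of the form
\[
\int_{B_\rho}\bigl|\na\bigl(|\na u|^{(p-2)/2}\na u\bigr)\bigr|^2\eta^2
\;\le\; \frac{c}{(\sigma-\rho)^2}\int_{B_\sigma}|\na u|^{p}
\;+\; c\int_{B_\sigma}|\na u|^{p}\,|\na u|^{p-2}|\De_h u|^2\eta^2,
\]
where the first term on the right uses \eqref{eq: AF inequality} to absorb the elliptic part and the second uses estimate (2.6) of \cite{df90} (valid verbatim for $1<p<2$) for the second-fundamental-form term; (ii) convert this into a reverse-H\"older inequality for $|\na u|^{p}$ via the Sobolev inequality, obtaining $\bigl(\dashint_{B_\rho}|\na u|^{p\chi}\bigr)^{1/\chi}\lesssim \dashint_{B_\sigma}|\na u|^{p}$ with $\chi=n/(n-2)$ (or $\chi>1$ arbitrary if $n\le 2$), provided the normalized energy $r^{p-n}\int_{B_r}|\na u|^p$ is below a threshold $\ep_1$ so that the nonlinear (quadratic-in-energy) term can be absorbed; (iii) run Moser iteration on a nested family of balls to pass from the $L^p$ norm to the $L^\infty$ norm, yielding $\sup_{B_{r/2}}|\na u|^p\le C_1\dashint_{B_r}|\na u|^p$ with $C_1,\ep_1$ depending only on $n,k,p$ and the geometry of $N$; (iv) finally, a scaling normalization reduces the general radius $r$ to $r=1$, which is where the factor $r^{p-n}$ enters the smallness hypothesis.

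The main obstacle I expect is step (i)--(ii) in the regime $1<p<2$: here the weight $(\ep^2+|\na u|^2)^{(p-2)/2}$ \emph{degenerates where $|\na u|$ is large} rather than where it is small, which is the opposite of the $p\ge 2$ case, so the coercivity constant $(p-1)$ from the lower bound in \eqref{eq: AF inequality} must be tracked carefully and the "good" term $\int \eta^2(|\na u(x)|+|\na u(x+he_\be)|)^{p-2}|\De_h\na u|^2$ must be related to $\int\eta^2|\na(|\na u|^{(p-2)/2}\na u)|^2$ without losing it; this is precisely the point where the argument of Acerbi--Fusco \cite{af89} is needed, and where Proposition \ref{prop: second order derivatives} does the heavy lifting by guaranteeing the relevant second derivatives exist in $L^2_{\loc}(\Om_+)$. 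Everything else—the absorption of the curvature term using the smallness of $E_p(u)$, and the Moser iteration—is routine once the reverse-H\"older inequality is in hand, so I would present those steps only in outline and refer to \cite{df90} for the identical bookkeeping.
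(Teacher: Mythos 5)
Your proposal is correct and follows essentially the same route as the paper: the paper's proof of this proposition consists precisely of observing that the only ingredient of Duzaar--Fuchs' Theorem 2.1 that needs reworking for $1<p<2$ is the $W^{2,2}$ estimate, which is supplied by Proposition~\ref{prop: second order derivatives}, after which ``the rest arguments of Duzaar and Fuchs can be applied without changes.'' Your outline (Caccioppoli for $|\na u|^{(p-2)/2}\na u$, reverse H\"older, absorption of the curvature term via the smallness hypothesis, Moser iteration, scaling) is exactly the bookkeeping the paper delegates to \cite{df90}, and your identification of the degenerate weight and of \eqref{eq: AF inequality} as the crux matches the paper's reasoning.
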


In the case $p\ge2$, the above result is Theorem 2.1 of Duzaar
and Fuchs \cite{df90}. The key ingredient in the proof of Theorem 2.1
is to derive $W^{2,2}$ type regularity estimates for $p$-harmonic mappings; see Lemma 2.2 of Duzaar and Fuchs \cite{df90}. In our case, one can easily check that, with the $W^{2,2}$ regularity estimates (Proposition~\ref{prop: second order derivatives}) at hand, the rest arguments of Duzaar and Fuchs \cite{df90} can be applied without changes.

The second ingredient is the following proposition, which extend Proposition
3.1 of Duzaar and Fuchs \cite{df90} to the case $1<p<2$.

\begin{proposition} \label{prop: DF proposition 3.1} There exist constants
$\ep_{0}>0$, $\si\in(0,1)$, depending only on $n,k,p$ and the curvature
assumptions of $N$, such that for any weakly $p$-harmonic mapping
\[
u\in C^{1}(B_{1}\backslash\{0\},N)\cap W^{1,p}(B_{1},\R^{k})
\]
with $\int_{B_{1}}|\na u|^{p}\le\ep_{0}$, it holds
\[
\si^{p-n}E(\si)\le\frac{1}{2}E(1),
\]
where we used the notation $E(r)=\int_{B_{r}}|\na u|^{p}$.
\end{proposition}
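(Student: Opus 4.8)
The plan is to follow the scheme of Duzaar and Fuchs \cite[Proposition 3.1]{df90}, which is an energy-decay (``$\epsilon$-regularity by iteration'') argument, using the tools now available for $1<p<2$. First I would recall the monotonicity formula \cite[Lemma 4.1]{hl87} for weakly $p$-harmonic mappings (valid here since we are in Euclidean domains), which gives that $r^{p-n}E(r)$ is non-decreasing in $r$ and that
\[
(\sigma_2)^{p-n}E(\sigma_2)-(\sigma_1)^{p-n}E(\sigma_1)=p\int_{B_{\sigma_2}\setminus B_{\sigma_1}}|x|^{p-n}|\na u|^{p-2}\Big|\tfrac{\pa u}{\pa r}\Big|^2\,dx
\]
for $0<\sigma_1<\sigma_2\le 1$. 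Thus it suffices to show that if $E(1)\le\epsilon_0$, then for a fixed $\sigma\in(0,1)$ one has the strict decay $\sigma^{p-n}E(\sigma)\le\frac12 E(1)$; combined with monotonicity this is the assertion. Equivalently, one wants a uniform bound from below on the ``defect'' term, i.e. one must rule out the possibility that $u$ is nearly radially constant on the whole annulus $B_1\setminus B_\sigma$ while carrying non-trivial energy.

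The key steps, in order, would be: (i) argue by contradiction, taking a sequence $u_j\in C^1(B_1\setminus\{0\},N)\cap W^{1,p}(B_1,N)$ of weakly $p$-harmonic maps with $E_{u_j}(1)\le\epsilon_0\to 0$ but $\sigma^{p-n}E_{u_j}(\sigma)>\frac12 E_{u_j}(1)$ for every admissible $\sigma$; (ii) rescale by setting $\lambda_j:=E_{u_j}(1)^{1/2}\to 0$ and $v_j:=(u_j-\bar u_j)/\lambda_j$ (subtracting a mean value over an annulus as in the Caccioppoli argument of Section 3), so that $v_j$ has unit normalized $p$-energy on $B_1$; (iii) use the Caccioppoli/reverse-Hölder machinery already established — here the Caccioppoli inequality \eqref{eq:key caccioppoli} and the higher integrability \eqref{eq:reverse holder}, together with Proposition~\ref{prop: second order derivatives} — to get uniform local $W^{1,q}$ ($q>p$) and $W^{2,2}_{\loc}$ bounds on the $v_j$ away from the origin, extract a limit $v_0$, and pass to the limit in the linearized equation (the $p$-Laplace system linearizes around the ``flat'' solution to a weighted Laplace-type system, as in \cite[Section 3]{df90}); (iv) show the limit $v_0$ is a non-trivial solution of the limiting equation on $B_1\setminus\{0\}$ with a removable singularity at $0$ and with $\tfrac{\pa v_0}{\pa r}\equiv 0$ on the annulus (the normalized monotonicity defect passes to the limit and vanishes because the total energy was $\to 0$ while the decay failed), hence $v_0$ is $0$-homogeneous; (v) invoke the Liouville-type statement for $0$-homogeneous solutions of the limiting system — exactly the flat analogue of Theorem~\ref{thm:Liouville} / the argument of Fuchs \cite[Page 413]{f88} — to conclude $v_0$ is constant, contradicting $E_{v_0}(1)=1$.

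The main obstacle I anticipate is step (iii): in the range $1<p<2$ the operator is singular rather than degenerate, so the difference-quotient estimates and the compactness of the blow-up sequence are more delicate than in \cite{df90}, and this is precisely why Proposition~\ref{prop: second order derivatives} (the $W^{2,2}_{\loc}(\Omega_+)$ estimate proved via the Acerbi–Fusco inequality \eqref{eq: AF inequality}) is needed as an input — one must ensure the $W^{2,2}$ bounds are uniform along the rescaled sequence on annuli bounded away from the origin, using that $M_r=\sup_{B_r}|\na u|$ is controlled through Proposition~\ref{prop: gredient estimates under small energy assumption}. A secondary technical point is identifying the correct limiting equation and verifying that the singularity at the origin is removable for it; but since the linearized system has bounded measurable coefficients converging locally uniformly (away from $0$) to constant ones, this follows from standard capacity/removability arguments once the uniform bounds are in hand. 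Modulo these adaptations, the proof is parallel to \cite[Proof of Proposition 3.1]{df90}, so I would present it as ``the argument of Duzaar and Fuchs applies with the above modifications,'' spelling out only the two or three points where $p<2$ forces a genuinely different estimate.
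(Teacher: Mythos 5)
Your overall plan---a contradiction/blow-up argument in the spirit of Duzaar--Fuchs, normalizing by the total energy---is the same as the paper's, but the step you use to close the argument is wrong. Under the contradiction hypothesis $\si^{p-n}E_{u_j}(\si)>\tfrac12 E_{u_j}(1)$, the monotonicity formula only tells you that the \emph{normalized} defect
\[
\frac{1}{E_{u_j}(1)}\Bigl(E_{u_j}(1)-\si^{p-n}E_{u_j}(\si)\Bigr)
\]
lies in $[0,\tfrac12)$; it does not tend to $0$, because you are dividing the defect by the very quantity $E_{u_j}(1)$ that is being sent to zero. (Vanishing of the defect is what one gets for spatial blow-ups $u(x_0+r_ix)$ along a subsequence of scales, not for the amplitude rescaling $v_j=(u_j-\bar u_j)/\lambda_j$ used here.) Hence your step (iv) fails: the limit $v_0$ is not $0$-homogeneous, and the Liouville argument of step (v) has nothing to apply to. The correct closing move---which is what Duzaar and Fuchs do and what the paper verifies for $1<p<2$---is different: after dividing the equation by $\lambda_j^{p-1}$ the constraint term is $O(\lambda_j)$, so the limit $v_\wq$ solves the \emph{unconstrained} $p$-Laplace system on $B_{1/2}$ (the origin causes no concentration since $E(r)\le r^{n-p}E(1)$ by monotonicity), and one then invokes the interior energy decay $\int_{B_\si}|\na v_\wq|^p\le C\si^{n}\int_{B_{1/2}}|\na v_\wq|^p$ for such solutions---this is estimate (3.10) of \cite{df90}, supplied in the range $1<p<2$ by \cite[Proposition~2.7]{af89}---and chooses $\si$ with $C\si^{p}<\tfrac14$ to contradict the lower bound inherited by $v_\wq$ from the strong convergence.

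A secondary but genuine issue is your step (iii). The Caccioppoli inequality \eqref{eq:key caccioppoli} and the reverse H\"older inequality \eqref{eq:reverse holder} were derived in Section~\ref{sec:proof main thm 2} under the hypothesis that the image lies in a regular geodesic ball (they rest on the Hildebrandt--Kaul--Widman bound $a_\kappa(2R)>0$), and no such hypothesis is available here, so you cannot import them; likewise Proposition~\ref{prop: second order derivatives} is the input for Proposition~\ref{prop: gredient estimates under small energy assumption}, not for the present decay lemma, and its constant is not uniform along the rescaled sequence. The compactness is obtained more directly: test the equation with $(v_i-v_j)\eta^p$, use the monotonicity inequality \eqref{eq: AF inequality} to show that $\int(|\na v_i|^2+|\na v_j|^2)^{(p-2)/2}|\na v_i-\na v_j|^2\eta^p\to0$, and then interpolate via H\"older's inequality to get strong $W^{1,p}$ convergence on annuli $B_{1/2}\setminus B_r$. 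This interpolation is precisely the point where $1<p<2$ requires a new (but short) argument, since for $p\ge2$ the quantity in \eqref{eq: AF inequality} directly dominates $|\na v_i-\na v_j|^p$. (Minor: the natural normalization is $\lambda_j=E_{u_j}(1)^{1/p}$, not $E_{u_j}(1)^{1/2}$.)
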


 To establish this result for $1<p<2$, we only need to show that similar estimates as equations (3.5) and (3.10) of Duzaar and Fuchs \cite{df90} holds for the case $1<p<2$ as well.

Let $\{v_{i}\}$ be defined as that of \cite[Page 397]{df90}. Then,
by the same arguments as that of \cite{df90}, we have
\[
\int_{B_{{1}/{2}}\backslash B_{r}}\left(|\na v_{i}|^{p-2}\na_{\al}v_{i}-|\na v_{j}|^{p-2}\na_{\al}v_{j}\right)\cdot\left(\na_{\al}v_{i}-\na_{\al}v_{j}\right)\eta^{p}\to0
\]
 as $i,j\to\wq$. Then, (\ref{eq: AF inequality}) implies\footnote{Note that there is a typos in (3.5) of Duzaar and Fuchs \cite{df90}:
the first symbol $\wq$ in (3.5) should be $0$.}
\begin{eqnarray*}
\int_{B_{1/2}\backslash B_{r}}\left(|\na v_{j}|^{2}+|\na v_{i}|^{2}\right)^{{\frac{p-2}{2}}}|\na v_{i}-\na v_{j}|^{2}\eta^{p}\to0, &  & \text{as }i,j\to\wq,
\end{eqnarray*}
from which we deduce that $v_{i}\to v_{\wq}$ strongly in $W^{1,p}(B_{1/2}\backslash B_{r})$
for some $v_{\wq}$ in $W^{1,p}(B_{1/2}\backslash B_{r})$, in view of the following H\"older inequality
\[
\int|\na v_{i}-\na v_{j}|^{p}\eta^{p}\le\left(\int\frac{|\na v_{i}-\na v_{j}|^{2}}{\left(|\na v_{j}|^{2}+|\na v_{i}|^{2}\right)^{\frac{2-p}{2}}}\eta^{p}\right)^{\frac{p}{2}}\left(\int\left(|\na v_{j}|^{2}+|\na v_{i}|^{2}\right)^{\frac{p}{2}}\eta^{p}\right)^{\frac{2-p}{2}}
\]
and the fact that $\{v_{j}\}$ is uniformly bounded in $W^{1,p}(B_{1},\R^{k})$.
Hence (3.5) of Duzaar and Fuchs \cite{df90} holds for $1<p<2$ as
well.

As to the estimate (3.10) of Duzaar and Fuchs \cite{df90}, it has
been established in the case $1<p<2$ by Acerbi and Fusco \cite[Proposition 2.7]{af89}.

The rest { of the} arguments of Duzaar and Fuchs \cite{df90} remains valid
for $1<p<2$, and so Proposition \ref{prop: DF proposition 3.1} holds.

With Propositions \ref{prop: gredient estimates under small energy assumption}
and \ref{prop: DF proposition 3.1} at hand, Theorem \ref{thm: Duzaar-Fuchs theorem}
follows by the same arguments as that of Duzaar and Fuchs \cite{df90} and so we omit the details.

%
%
%

\end{document}